\documentclass[a4paper, 12pt]{amsart}

\usepackage[T1]{fontenc}
\usepackage[utf8]{inputenc}

\usepackage{amssymb}
\usepackage{mathrsfs}
\usepackage{mathtools}
\usepackage{enumitem}

\usepackage{xparse}

\setlength{\textwidth}{400pt}

\newcommand{\ov}[1]{\overline{#1}}

\allowdisplaybreaks[2]

\newtheorem{theorem}{Theorem}[section]
\newtheorem{proposition}[theorem]{Proposition}
\newtheorem{lemma}[theorem]{Lemma}

\theoremstyle{definition}
\newtheorem{remark}[theorem]{Remark}

\numberwithin{equation}{section}

\newtheorem{mainthm}{Theorem}

\newtheorem{conjecture}[mainthm]{Conjecture}

\newcommand{\N}{\mathbb N}

\newcommand{\Z}{\mathbb Z}
\newcommand{\C}{\mathbb C}

\newcommand{\HH}{\mathbb H}

\newcommand{\aaa}{\textbf{a}}
\newcommand{\bbb}{\textbf{b}}
\newcommand{\ccc}{\textbf{c}}
\newcommand{\ddd}{\textbf{d}}
\newcommand{\eee}{\textbf{h}}
\newcommand{\nnn}{\textbf{n}}
\newcommand{\rrr}{\textbf{r}}
\newcommand{\uuu}{\textbf{u}}
\newcommand{\vvv}{\textbf{v}}
\newcommand{\www}{\textbf{w}}
\newcommand{\xxx}{\textbf{x}}
\newcommand{\yyy}{\textbf{y}}
\newcommand{\zzz}{\textbf{z}}
\newcommand{\veca}{\vec{a}}
\newcommand{\vecb}{\vec{b}}

\newcommand{\vecd}{\vec{d}}
\newcommand{\vece}{\vec{e}}
\newcommand{\vecu}{\vec{u}}
\newcommand{\vecv}{\vec{v}}
\newcommand{\vecw}{\vec{w}}
\newcommand{\vecz}{\vec{z}}

\DeclareMathOperator{\diag}{diag}

\ExplSyntaxOn
\NewDocumentCommand{\cycle}{ O{\;} m }
{
	(
	\alec_cycle:nn { #1 } { #2 }
	)
}

\seq_new:N \l_alec_cycle_seq
\cs_new_protected:Npn \alec_cycle:nn #1 #2
{
	\seq_set_split:Nnn \l_alec_cycle_seq { , } { #2 }
	\seq_use:Nn \l_alec_cycle_seq { #1 }
}
\ExplSyntaxOff

\title[The L'vov-Kaplansky conjecture]{The L'vov-Kaplansky conjecture for polynomials of degree three}

\author{Daniel Vitas}
\address{Department of Mathematics, Faculty of Mathematics and Physics,  University of Ljubljana, Slovenia}
\email{daniel.vitas@gmail.com}

\thanks{\emph{Mathematics Subject Classification} (2020). 16R99, 16S50, 15A30}
\keywords{Multilinear polynomial,  L'vov-Kaplansky conjecture}

\begin{document}

\begin{abstract} The L'vov-Kaplansky conjecture states that the image of a multilinear noncommutative polynomial $f$ in the matrix algebra $M_n(K)$ is a vector space for every $n \in {\mathbb N}$. We prove this conjecture for the case where $f$ has degree $3$ and $K$ is an algebraically closed field 
of characteristic $0$.
\end{abstract}

\maketitle

\section{Introduction}

Let $K$ be a field. By $K \langle X_1, \ldots, X_m \rangle$ we denote the free algebra in variables $X_1,\ldots, X_m$ over $K$. We call its elements \emph{noncommutative polynomials}. For a noncommutative polynomial $f \in K \langle X_1, \ldots, X_m \rangle$ and a $K$-algebra $A$, we call the set
$$ f(A) = \{ f(a_1,\ldots,a_m) \mid a_1,\ldots,a_m \in A \}$$
the \emph{image} of $f$ in $A$. A noncommutative polynomial $f$ is called \emph{multilinear} if it is of the form
$$ f(X_1,\ldots,X_m) = \sum_{\sigma \in S_m} \lambda_\sigma X_{\sigma(1)} \cdots X_{\sigma(m)}$$
for some $\lambda_\sigma \in K$. 

The image of a noncommutative polynomial is obviously closed under conjugation by invertible elements in $A$. If the polynomial is multilinear, its image is also closed under scalar multiplication. 

The following conjecture  was initiated by Kaplansky and later formulated by L'vov  \cite[Question 1.98]{Dn}.

\begin{conjecture}[L'vov-Kaplansky]
	Let $m \in \N$ and let $K$ be a field. If $f \in K\langle X_1,\ldots,X_m \rangle$ is a multilinear polynomial, then the image of $f$ in $M_n(K)$ is a vector space for every $n \in \N$.
\end{conjecture}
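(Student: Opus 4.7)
Since the paper proves the degree-$3$ case of this conjecture over an algebraically closed field of characteristic zero, I plan for $m=3$ and such $K$. Over such $K$, the only $\mathrm{GL}_n(K)$-invariant vector subspaces of $M_n(K)$ are $\{0\}$, $KI$, $\mathfrak{sl}_n(K)$, and $M_n(K)$, so the goal is to show $f(M_n(K))$ equals one of these four. Since the image is automatically conjugation- and scaling-invariant, the substantive content is additive closure.

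The first ingredient is a trace analysis. By cyclicity of trace, the six monomials of $f$ collapse into the two cyclic classes of $S_3$:
\begin{equation*}
\tr\bigl(f(x_1,x_2,x_3)\bigr) = A\,\tr(x_1 x_2 x_3) + B\,\tr(x_1 x_3 x_2),
\end{equation*}
where $A = \lambda_e + \lambda_{(123)} + \lambda_{(132)}$ and $B = \lambda_{(12)} + \lambda_{(13)} + \lambda_{(23)}$. The second ingredient is reduction by specialization: setting any single variable equal to $I$ turns $f$ into a degree-$2$ multilinear polynomial $\gamma\, xy + \delta\, yx$, and one checks that $\gamma+\delta$ equals $A+B=\sum_\sigma \lambda_\sigma$ independently of which variable was specialized. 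The degree-$2$ case of the conjecture is elementary: the image of $\gamma xy + \delta yx$ is $M_n(K)$ if $\gamma+\delta\ne0$, equals $\mathfrak{sl}_n(K)$ if $\gamma+\delta=0$ with $(\gamma,\delta)\ne(0,0)$, and is $\{0\}$ otherwise.

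Combining these yields a case split. If $A+B\ne0$, specialising $x_1 = I$ already gives $f(M_n(K))\supseteq M_n(K)$, so the image equals $M_n(K)$. If $A+B=0$ and some one-variable specialization is non-zero, then $\mathfrak{sl}_n(K)\subseteq f(M_n(K))$. If all three specializations vanish, the six resulting linear equations on the $\lambda_\sigma$ cut out a $2$-dimensional subspace of $K^6$, which via the Jacobi identity coincides with the span of the double commutators $[x_1,[x_2,x_3]]$ and $[x_2,[x_1,x_3]]$; since $[x_1,[x_2,x_3]]$ is known to have image $\mathfrak{sl}_n(K)$ and commutators are traceless, we obtain $\mathfrak{sl}_n(K)\subseteq f(M_n(K))\subseteq \mathfrak{sl}_n(K)$, hence equality. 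Whenever additionally $A=B=0$, i.e.\ $\tr(f)\equiv0$, we have $f(M_n(K))\subseteq \mathfrak{sl}_n(K)$, so the image is exactly $\mathfrak{sl}_n(K)$ (or $\{0\}$).

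The main obstacle is the remaining case $A+B=0$ but $(A,B)\ne(0,0)$: the image then contains $\mathfrak{sl}_n(K)$ and some matrix of non-zero trace, yet additive closure is not automatic, and one still needs to realize every $\mu I + z$ (with $\mu\in K$, $z\in \mathfrak{sl}_n(K)$) as a single value $f(x_1,x_2,x_3)$. My plan would be to fix a Jordan similarity class of prescribed trace and construct an explicit parametric family of triples whose evaluations exhaust it---for instance, taking $x_1$ to be a generic diagonalisable matrix of the correct trace and then using the remaining freedom in $(x_2,x_3)$ to adjust the traceless part. Algebraic closedness of $K$ and characteristic $0$ enter through the availability of Jordan canonical form and through density/irreducibility arguments ruling out the parameter degeneracies that would otherwise leave a similarity class unreached.
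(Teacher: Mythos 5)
First, note that the statement you are addressing is the L'vov--Kaplansky conjecture itself, which remains open; the paper only proves the degree-three case over an algebraically closed field of characteristic $0$ (its Theorem~A), and your proposal likewise restricts to that case, so at best it could be compared with the paper's proof of Theorem~A, not of the conjecture.

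Within that restricted setting there is a genuine gap, and it sits exactly where all of the difficulty lies. Your easy cases are fine but essentially known: if $\sum_\sigma \lambda_\sigma \neq 0$ one plugs the identity into two variables and gets surjectivity trivially; if the trace form vanishes identically, Mesyan's theorem (which the paper also invokes) gives $f(M_n(K)) = \mathrm{sl}_n(K)$. The substantive case is $A+B=0$ with $(A,B)\neq(0,0)$: here the span of the image is all of $M_n(K)$, and one must show the image itself is $M_n(K)$. For this case you offer only a plan (``fix a Jordan similarity class \dots take $x_1$ a generic diagonalisable matrix \dots use the remaining freedom in $(x_2,x_3)$''), with no mechanism for why the two remaining variables suffice to hit a prescribed matrix, no treatment of the parameter degeneracies you acknowledge, and no explanation of why the argument should work for odd $n$, which is precisely where the prior result of Dykema and Klep broke down. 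The paper's proof of this case is its entire technical content: it evaluates $f$ on matrices supported on cyclic-shift diagonals, splits a target Jordan form $\ddd+\nnn$ into two $n\times n$ linear systems, proves via Lemmas~2.1--2.4 and Proposition~2.5 that non-solvability forces the coefficients of $f$ to satisfy explicit equations involving $n$-th roots of unity, solves that system in Proposition~3.2 using the auxiliary surjectivity of $x[y,z]-\lambda[y,z]x$ and of $xyz-zyx$ (Khurana--Lam), handles separately the exceptional Jordan type with a single $2\times2$ block via the $2\times2$ result of Kanel-Belov--Malev--Rowen, and verifies the root-of-unity condition over $\C$ before invoking the Lefschetz principle. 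None of this is replaced by anything in your sketch, so the proposal does not constitute a proof even of the degree-three case.
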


Since the linear span of the image of a multilinear polynomial is a Lie ideal \cite[Theorem 2.3]{BK},  the L'vov-Kaplansky conjecture
can be equivalently stated: the image of a multilinear polynomial $f$ in $M_n(K)$ can only be one of the following four spaces: $\{0\}$, the space of scalar matrices $K$, the space of traceless matrices ${\rm sl}_n(K)$, or the whole matrix algebra $M_n(K)$.

So far, only partial results about the L'vov-Kaplansky conjecture are known. In \cite{KBMR}, Kanel-Belov, Malev, and Rowen proved
this conjecture for $2 \times 2$ matrices (over a quadratically closed field $K$). Even the $n=3$ case remains unsolved, although the aforementioned authors obtained some partial results \cite{KBMR2}.

In \cite{Me}, Mesyan proved that the image of a multilinear polynomial of degree $m=3$ in $M_n(K)$ contains all traceless matrices for every $n \in \N$. The result was later extended to $m=4$ (for $n \geq 3$) by Buzinski and Winstanley \cite{BW}; an error in their paper was corrected in \cite{FMS}. 

Several authors considered the conjecture for algebras $A$ other than $M_n(K)$. In \cite{Ma}, Malev proved the L'vov-Kaplansky conjecture for the algebra of quaternions $A = \HH$. Fagundes proved it for the algebra of strictly upper triangular matrices $A = UT^{(0)}_n$ \cite{Fa}, while Gargate and de Mello proved it for the algebra of upper triangular matrices $A = UT_n$ \cite{GM}. The author proved the conjecture for any algebra $A$ with a surjective inner derivation \cite{Vi}, e.g., the algebra of endomorphisms of an infinite dimensional vector space $A = {\rm End}(V)$, and the $n$-th Weyl algebra $A = A_n$ for every $n \in \N$. In \cite{Sp}, {\v S}penko determined the images of multilienar Lie polynomials of degree at most $4$. Chuang classified the images of (not necessarily multilinear) noncommutative polynomials in finite matrix rings \cite{Ch}. Bre{\v s}ar and {\v S}emrl considered the Waring type problem for matrix algebras \cite{BS,BS2}; see also \cite{BV}.
For more related problems and further references on the L'vov-Kaplansky conjecture, we refer the reader to the survey \cite{RYMKB}.

Our main motivation is the result by Dykema and Klep \cite{DK} which states that for every complex multilinear polynomial $f$ of degree three, the image of $f$ in $M_n(\C)$ is a vector space if either $n$ is even or $n < 17$. The case where $n$ is odd was thus left open, except for small $n$.
The authors mentioned that their result in fact holds for every algebraically closed field of characteristic zero. This is due to the Lefschetz principle (proved by Tarski), which states that for a first-order statement (in the language of rings) there is an $N$, depending on the sentence, such that the sentence is true in the field of complex numbers if and only if the same sentence is true in any algebraically closed field of either characteristic $0$ or characteristic larger than $N$.
We will extend the result of Dykema and Klep to every $n \in \N$, i.e., we will prove the L'vov-Kaplansky conjecture for multilinear polynomials of degree three.

\begin{mainthm}\label{lvov-kaplansky}
	Let $K$ be an algebraically closed field of characteristic $0$. If $f \in K \langle x, y,z \rangle$ is a multilinear polynomial of degree three, then the image of $f$ in $M_n(K)$ is a vector space for every $n \in \N$.
\end{mainthm}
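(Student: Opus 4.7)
By Mesyan's theorem \cite{Me}, $f(M_n(K))\supseteq{\rm sl}_n(K)$ for every $n$, so it suffices to show that $f(M_n(K))$ equals either ${\rm sl}_n(K)$ or $M_n(K)$. Cyclic invariance of the trace yields
\[
\tr\bigl(f(A,B,C)\bigr)=\alpha\,\tr(ABC)+\beta\,\tr(ACB),
\]
with $\alpha,\beta\in K$ determined by the coefficients of $f$. If $\alpha=\beta=0$ then the image sits inside ${\rm sl}_n(K)$ and we are done, so assume henceforth $(\alpha,\beta)\neq(0,0)$; the task becomes to prove $f(M_n(K))=M_n(K)$.

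\textbf{The subcase $\alpha+\beta\neq 0$.} Since in each monomial $x_{\sigma(1)}x_{\sigma(2)}x_{\sigma(3)}$ the two identity factors $x_2=x_3=I$ disappear regardless of $\sigma$, leaving the single copy of $P=x_1$, one obtains
\[
f(P,I,I)=\Bigl(\sum_{\sigma\in S_3}\lambda_\sigma\Bigr)P=(\alpha+\beta)\,P
\]
for every $P\in M_n(K)$. Hence $P\mapsto f(P,I,I)$ is a linear isomorphism of $M_n(K)$, and $f(M_n(K))=M_n(K)$.

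\textbf{The hard subcase $\alpha=-\beta\neq 0$ and the main obstacle.} Here the above trick yields $0$, and the trace reduces to $\alpha\,\tr(A[B,C])$. My plan is to exhibit a triple $(A_0,B_0,C_0)\in M_n(K)^3$ with $f(A_0,B_0,C_0)=I$ and such that the partial linear map $\varphi\colon P\mapsto f(P,B_0,C_0)$ is surjective onto ${\rm sl}_n(K)$. Granted these, multilinearity gives $f(A_0+P,B_0,C_0)=I+\varphi(P)$, so $I+{\rm sl}_n(K)\subseteq f(M_n(K))$; combining with ${\rm sl}_n(K)\subseteq f(M_n(K))$ and the closure of the image under scalar multiplication then shows that every $\mu I+T\in M_n(K)$ lies in $f(M_n(K))$. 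A guiding computation gives $f_a(E_{12},E_{23},E_{31})=I$ in $M_3(K)$ for the totally antisymmetric polynomial $f_a$, and for general $n$ I would assemble the base triple block-diagonally from smaller templates, appealing to \cite{DK} for the template on each even block (and on every block of size $<17$). In this way the odd-$n$ case reduces, via the decomposition $n=3+(n-3)$ with $n-3$ even, to Dykema-Klep. The main obstacle is verifying the surjectivity of $\varphi$ onto ${\rm sl}_n(K)$, which will likely require choosing the base triple within a Zariski open parameter family to control its kernel.
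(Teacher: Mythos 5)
Your reductions at the start are fine and agree with the paper's (traceless case via Mesyan, and surjectivity when the coefficient sum $\alpha+\beta\neq 0$ by evaluating at $(P,I,I)$), but everything after that is only a plan, and its central step is not just unproven — it is false for some polynomials in your hard subcase. Take $f(x,y,z)=xyz-zyx$, for which $\alpha=-\beta=1$. Then $\varphi(P)=f(P,B_0,C_0)=PB_0C_0-C_0B_0P$, and since $B_0C_0$ and $C_0B_0$ always have the same spectrum, Sylvester's theorem shows $\varphi$ is singular for \emph{every} choice of $(B_0,C_0)$; no Zariski-open perturbation can fix this. Moreover its image can contain ${\rm sl}_n(K)$ only if it equals ${\rm sl}_n(K)$, which would force $\tr\bigl(P(B_0C_0-C_0B_0)\bigr)=0$ for all $P$, hence $B_0C_0=C_0B_0$, and then the image has dimension at most $n^2-n<n^2-1$. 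So for this $f$ there is no base triple with the property your argument needs, even though $f$ is in fact surjective (Khurana--Lam \cite{Kh}, which the paper invokes for exactly this polynomial). Thus the "main obstacle" you flag is not a technical verification but the actual content of the theorem, and your proposed mechanism cannot deliver it uniformly in the coefficients.

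The fallback you sketch does not close the gap either: assembling a block-diagonal triple with $f(A_0,B_0,C_0)=I_n$ from a $3\times 3$ template and an even block via \cite{DK} only puts the scalar matrices into the image (after scaling), and since the image is not known to be closed under addition, $I\in f(M_n(K))$ together with ${\rm sl}_n(K)\subseteq f(M_n(K))$ does not yield $M_n(K)$; you would still need the affine-space step $I+{\rm sl}_n(K)\subseteq f(M_n(K))$, i.e.\ precisely the surjectivity of $\varphi$ that fails above (and a block-diagonal $(B_0,C_0)$ makes $\varphi$ preserve the block structure, so surjectivity onto ${\rm sl}_n$ is not obtainable from \cite{DK} blockwise). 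By contrast, the paper avoids fixing two generic matrices: it evaluates $f$ on structured witnesses (diagonal matrices times powers of a cyclic permutation matrix), converts the surjectivity question into the nonvanishing of explicit determinants, extracts from their identical vanishing a system of polynomial conditions on $\lambda_i,\mu_i$ (Proposition \ref{pomembna lema}), solves that system (Proposition \ref{prop1}), and treats the residual degenerate families — $x[y,z]-\lambda[y,z]x$ via Albert--Muckenhoupt and $xyz-zyx$ via \cite{Kh} — by separate arguments; these degenerate families are exactly where your $\varphi$-strategy breaks down.
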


We actually prove a  more general result (Theorem \ref{thm}) in which the assumption on  the field $K$ is less restrictive.

We will now briefly illustrate our approach to proving Theorem \ref{lvov-kaplansky}. More precisely,
for simplicity of exposition we will actually sketch a slight modification of our approach. We should also mention that we use some ideas from \cite{DK}. However, our approach is more conceptual and works for both odd and even integers $n$.
 
Let $K$ be an algebraically closed field and let $f$ be a multilinear polynomial of degree three. Assume that $f$ is not surjective in $M_n(K)$. Since $K$ is algebraically closed and the image 
of $f$ is closed under conjugation by invertible elements, there exists a matrix 
$\aaa \in M_n(K)$ in Jordan normal form
 that does not lie in the image of $f$. Let
$$ \aaa = \ddd + \nnn$$
with a diagonal matrix $\ddd$ and a matrix $\nnn$ with non-zero entries only directly above the main diagonal. Set 
\[
\rrr =
\begin{bmatrix}
	0 & 1 & 0 & \ldots & 0 \\
	0 & 0 & 1 & \ldots & 0 \\
	\vdots & \vdots & \vdots & \ddots & \vdots \\
	0 & 0 & 0 & \ldots & 1 \\
	1 & 0 & 0 & \ldots & 0
\end{bmatrix} \in M_n(K) 
\] and
fix the matrices
\begin{align*}
	\xxx &= \diag(x_1,\ldots,x_n) \rrr^{j} \text{,}\\
	\yyy &= \diag(y_1,\ldots,y_n) \rrr^{k} 
\end{align*}
for some $x_i,y_i \in K$ and $j,k \in \Z$.
Let
\begin{align*}
	\zzz &= \diag(z_1,\ldots,z_n) \rrr^{-j-k} \text{,}\\
	\www &= \diag(w_1,\ldots,w_n) \rrr^{1-j-k}
\end{align*}
for the variables $z_i, w_i \in K$. The system
$$ f(\xxx,\yyy,\zzz+\www) = \aaa$$
is equivalent to
\begin{align}\label{demo}
	f(\xxx,\yyy,\zzz) = \ddd \quad \text{and} \quad f(\xxx,\yyy,\www) = \nnn \text{.}
\end{align}
These are two systems of $n$ equations in $n$ variables. Therefore, we can write \eqref{demo} as
\begin{align} \label{demo2}
		\bbb \begin{bmatrix} z_1 \\ \vdots \\ z_n \end{bmatrix} = \begin{bmatrix} d_1 \\ \vdots \\ d_n \end{bmatrix}
		\quad \mbox{and} \quad
		\ccc \begin{bmatrix} w_n \\ \vdots \\ w_n \end{bmatrix} = \begin{bmatrix} \nu_1 \\ \vdots \\ \nu_n \end{bmatrix} \text{,}
\end{align}
where $d_i$ and $\nu_i$ are the entries of $\ddd$ and $\nnn$, respectively, and $\bbb,\ccc \in M_n(K)$. If both the matrix $\bbb$ and the matrix $\ccc$ are invertible, then the system \eqref{demo2} is solvable, which contradicts $\aaa$ not lying in the image of $f$; so we may assume that
$$ g(x_1,\ldots,x_n,y_1,\ldots,y_n) = (\det\bbb)(\det\ccc) = 0$$
for any choice of $x_i,y_i \in K$. Since the field $K$ is infinite, $g$ is the zero polynomial. Its coefficients are commutative polynomials in the coefficients of the polynomial $f$. This gives us a system of polynomial equations which coefficients of $f$ must solve. Solving this system reduces 
 the proof of Theorem \ref{lvov-kaplansky} to a few special cases. 

We will split the proof into two sections. Section \ref{lemma sec} contains a few lemmas serving to prove the key proposition at the end of the section, which yields several polynomial equations that the coefficients of $f$ must solve. In Section \ref{thm sec} we will first solve the aforementioned system of the polynomial equations, and then use this to prove Theorem \ref{lvov-kaplansky}.

This paper is based on the author's 
Master's Thesis \cite{Vi2}.


\section{Key Proposition} \label{lemma sec}

Let $F$ be an arbitrary field, and let $K$ be an infinite field extending $F$. Throughout this paper, the polynomial coefficients are in $F$, and the matrix entries are in $K$. Let $n \in \N$.
Denote by $\Z_n = \Z / n\Z$ the ring of integers modulo $n$ and write $\Z_n^\times$ for the multiplicative group of all invertible elements in $\Z_n$, i.e., all integers (modulo $n$) coprime to $n$. Similarly, we denote $F^\times = F \setminus \{0\}$ and $K^\times = K \setminus \{0\}$.

Denote by $\vece_i$ for $i \in \Z_n$ the standard basis for $K^n$.
Let $s\colon  K^n \rightarrow K^n$ be the linear map given by
\begin{align*}
	s(\vece_i) = \vece_{i+1} \text{.}
\end{align*}
Of course, this means that $s(\vece_{n-1}) = \vece_0$.

For $x_0,\ldots,x_{n-1} \in K^\times$, consider the matrix
$$\aaa(x_0,\ldots,x_{n-1}) = \begin{bmatrix} x_0 \vecu + x_{1} \vecv & s(x_1 \vecu + x_{2} \vecv) & \ldots & s^{n-1}(x_{n-1}\vecu + x_{0} \vecv) \end{bmatrix}$$
with $\vecu, \vecv \in F^n$. The matrix $\aaa$ will appear in the proof of the key proposition of this section. The goal of Lemmas \ref{osnovna lema}, \ref{circ lema}, and \ref{posledica1} is to prove that either the set of $(x_0,\ldots,x_{n-1})$ for which $\aaa$ is invertible is big or useful information about $\vecu$ and $\vecv$ can be given. Lemma \ref{odcepljena lema} will tell us that in either case, the image of $\aaa$ is at least $n-1$ dimensional and can be controlled to some extent.

\begin{lemma} \label{osnovna lema}
	Let $n \in \N$, let $F$ be any field, and let $K$ be an infinite field extending $F$. Let $\vecu, \vecv \in F^n$, and assume that $\vecu, s \vecu,\ldots,s^{n-1} \vecu$ are linearly independent. If the matrix
	$$\aaa(x_0,\ldots,x_{n-1}) = \begin{bmatrix} x_0 \vecu + x_{1} \vecv & s(x_1 \vecu + x_{2} \vecv) & \ldots & s^{n-1}(x_{n-1}\vecu + x_{0} \vecv) \end{bmatrix}$$
	is non-invertible for all $x_0, \ldots, x_{n-1} \in K^\times$, then there exist an $i \in \Z_n^\times$ and an $n$-th root of unity $\omega \in F$ such that
	\begin{align} \label{l1ii}
		\vecv = -\omega s^i \vecu \text{.}
	\end{align}
\end{lemma}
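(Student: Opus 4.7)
The plan is to convert the hypothesis $S = \emptyset$ into a polynomial identity in $F[x_0, \ldots, x_{n-1}]$ and then extract strong conditions on $\vecv$. Since $K$ is infinite, multiplying $\det\aaa$ by $x_0 \cdots x_{n-1}$ produces a polynomial vanishing on all of $K^n$, which must be zero; hence $\det\aaa \equiv 0$ as a polynomial. Because $\vecu$ is a cyclic vector for $s$, I would pass to the basis $\vecu, s\vecu, \ldots, s^{n-1}\vecu$ of $F^n$; in this basis $s$ still acts as the cyclic shift and $\vecv = p(s)\vecu$ for a unique $p(t) = \sum_{i=0}^{n-1} c_i t^i \in F[t]$. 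Writing $P := p(s)$ and $D_X := \diag(x_0, \ldots, x_{n-1})$, the matrix takes the form $\aaa = D_X + P\, s^{-1} D_X s$, whose $(k,j)$-entry is $x_j \delta_{k,j} + x_{j+1} c_{k-j}$.

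In the Leibniz expansion of $\det \aaa$, for each column $j$ we pick either the summand $x_j \delta_{k,j}$ (which forces $\sigma(j) = j$) or the summand $x_{j+1} c_{k-j}$. Grouping by the subset $T \subseteq \Z_n$ of columns where the first summand is taken yields
\[
\det\aaa = \sum_{T \subseteq \Z_n} \Bigl(\prod_{j \in T} x_j \prod_{j \in T^c} x_{j+1}\Bigr) \det(P_{T^c, T^c}).
\]
A short check shows the monomial $\prod_{j \in T} x_j \prod_{j \in T^c} x_{j+1}$ determines $T$ uniquely except for the pair $T = \emptyset$, $T = \Z_n$, which both yield $\prod_j x_j$. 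Consequently $\det \aaa \equiv 0$ is equivalent to
\[
\det P = -1 \quad \text{and} \quad \det P_{U,U} = 0 \text{ for every } \emptyset \neq U \subsetneq \Z_n,
\]
which I will refer to as $(\star)$. The $|U| = 1$ case of $(\star)$ forces $c_0 = 0$, and for $n \geq 3$ the $|U| = 2$ case forces $c_i c_{-i} = 0$ for every $i \neq 0$; hence the support $S := \{i \neq 0 : c_i \neq 0\}$ satisfies $S \cap (-S) = \emptyset$.

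The combinatorial heart of the proof is to show $|S| = 1$ with its unique element coprime to $n$. Let $k^*$ be the minimum length of a zero-sum sequence of elements of $S$ (with repetitions allowed), fix a shortest one $(\alpha_1, \ldots, \alpha_{k^*})$, and set $u_j := \alpha_1 + \cdots + \alpha_j$ together with $U := \{u_0, u_1, \ldots, u_{k^*-1}\}$. Minimality of $k^*$ makes the $u_j$ pairwise distinct, so if $k^* \leq n-1$ then $U$ is a proper subset of $\Z_n$. A permutation $\tau$ of $U$ contributes nonzero to $\det P_{U,U}$ only if each of its cycles traverses differences in $S$ summing to $0$; so every cycle length is at least $k^*$, and, since $|U| = k^*$, $\tau$ must be a single $k^*$-cycle. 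Using $S \cap (-S) = \emptyset$ and the minimality of $k^*$, each $u_j$ admits exactly one $s \in S$ with $u_j + s \in U$: a forward shortcut $u_j + s = u_i$ with $i > j+1$ would allow replacing the block $(\alpha_{j+1}, \ldots, \alpha_i)$ by the single element $s \in S$ and produce a strictly shorter zero-sum, while any off-canonical backward shortcut $u_j + s = u_i$ with $i < j$ and $(i,j) \neq (0, k^*-1)$ would produce a zero-sum $(\alpha_{i+1}, \ldots, \alpha_j, s)$ of length $j-i+1 < k^*$. Hence the unique Hamiltonian cycle on $U$ produces a single nonzero term $(-1)^{k^*-1} \prod_l c_{\alpha_l}$ in $\det P_{U,U}$, contradicting $(\star)$.

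Therefore $k^* \geq n$. But $k^* \leq n-1$ unless $|S| = 1$ and its unique element $i$ is coprime to $n$: if some $\alpha \in S$ has $d := \gcd(\alpha, n) > 1$, then $n/d \leq n-1$ copies of $\alpha$ sum to zero; if $|S| \geq 2$ with every element coprime to $n$, choose distinct $\alpha, \beta \in S$, set $m := \alpha \beta^{-1} \in \{2, \ldots, n-1\}$, and note $\alpha + (n-m)\beta \equiv m\beta + (n-m)\beta \equiv 0$ is a zero-sum of length $n+1-m \leq n-1$. Hence $p(t) = c_i t^i$ with $i \in \Z_n^\times$, and the surviving condition $\det P = c_i^n (-1)^{i(n-1)} = -1$ (using $\det s = (-1)^{n-1}$) yields $c_i = -\omega$ with $\omega \in F$ satisfying $\omega^n = 1$ (a short parity check using $\gcd(i,n) = 1$ shows this is the only possibility). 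Thus $\vecv = p(s)\vecu = -\omega s^i \vecu$, as required. The main obstacle is the out-degree one verification in paragraph three, where $S \cap (-S) = \emptyset$ and the minimality of $k^*$ combine most delicately.
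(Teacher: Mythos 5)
Your proof is correct, and although it opens the same way as the paper's (since $K$ is infinite, $\det\aaa$ vanishes identically as a polynomial; $\vecv$ is expanded in the cyclic basis $\vecu, s\vecu,\ldots,s^{n-1}\vecu$; and the relevant coefficients of the monomials in the $x_j$ are recognized as signed sums of products of the expansion coefficients, i.e.\ principal minors of your $P=p(s)$), the core of the argument is genuinely different. The paper proves, by induction on $k$, that every individual product $\alpha_{i_{\sigma(1)}-i_1}\cdots\alpha_{i_{\sigma(k)}-i_k}$ vanishes, the inductive step resting on a cycle-splicing/relabeling argument showing that at most one $k$-cycle could contribute to the signed sum, and only afterwards extracts the support structure of $\vecv$. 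You instead keep the minor conditions $(\star)$ intact and exhibit a single well-chosen set $U$ --- the partial sums of a minimal zero-sum sequence from the support --- whose principal minor has exactly one nonzero Leibniz term and hence cannot vanish; note that both the ``every cycle has length at least $k^*$'' step and the out-degree-one verification follow from minimality of $k^*$ alone, so your appeal to $S\cap(-S)=\emptyset$ is not actually needed. A short counting argument then converts ``no zero-sum of length at most $n-1$'' into ``the support is a single element of $\Z_n^\times$'', and your final parity computation of $\det P$ matches the paper's $(-\alpha_l)^n=1$. What each approach buys: the paper's induction is self-contained but intricate; your extremal choice of $U$ avoids induction entirely, and the monomial-uniqueness observation gives the clean equivalence of $\det\aaa\equiv0$ with ``$\det P=-1$ and all proper principal minors of $P$ vanish'', which is a tidier intermediate statement. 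Only cosmetic points to fix: your support set clashes notationally with the lemma's $S$; the degenerate case $\vecv=0$ (empty support, so $k^*$ undefined) should be dismissed first via $\det P=0\neq-1$, as the paper does explicitly; and for $n\le 2$ (where proper two-element subsets do not exist) the conclusion should be checked directly, though as noted that step of yours is not actually used.
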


\begin{proof}
	Denote $\uuu = \begin{bmatrix} \vecu & s \vecu & \ldots & s^{n-1}\vecu \end{bmatrix}$ and $\vvv = \begin{bmatrix} \vecv & s \vecv & \ldots & s^{n-1}\vecv \end{bmatrix}$, and let
	$$g(x_0,\ldots, x_{n-1}) = \det \aaa(x_0,\ldots,x_{n-1}) \text{.}$$
	By assumption, $g$ is zero for any choice of $x_0,\ldots, x_{n-1} \in K^\times$, and as the field $K$ is infinite, $g$ must be the zero polynomial. The coefficient at the term $ x_1^2 x_2 \cdots x_{n-1}$ in the polynomial $g$ is
	\begin{align*}
		\det \begin{bmatrix} \vecv &s\vecu & \ldots &s^{n-1}\vecu \end{bmatrix} = 0 \text{.}
	\end{align*}
	Since $s\vecu, \ldots, s^{n-1} \vecu$ are linearly independent, this shows that
	\begin{align} \label{razvoj v}
		\vecv = \sum_{i=1}^{n-1} \alpha_i s^i \vecu
	\end{align}
	for some $\alpha_i \in F$.
	We additionally let $\alpha_0 = 0$.
	If $\vecv = 0$, then $$g(1,\ldots,1) = \det \uuu = 0 \text{,}$$ which contradicts the linear independence of $\vecu,s\vecu,\ldots,s^{n-1}\vecu$. Hence, there is an $l \in \Z_n$ such that $\alpha_l \neq 0$.
	We will prove that for any choice of different $i_1, \ldots, i_k \in \Z_n$ (for $k \leq n-1$) and every permutation $\sigma \in S_k$ we have
	\begin{align}\label{dolg produkt}
		\alpha_\sigma = \alpha_{i_{\sigma(1)} - i_1} \alpha_{i_{\sigma(2)} - i_2} \cdots \alpha_{i_{\sigma(k)} - i_k} = 0 \text{.}
	\end{align}
	Assume temporarily that \eqref{dolg produkt} holds. First, we want to prove that $l$ is coprime to $n$; for contradiction assume that it is not. Take the smallest positive integer $k \in \Z$ such that $k l = 0$ modulo $n$. Then $i_j = jl$ for $j = 1,\ldots,k$
	are all different. Using the equality \eqref{dolg produkt} for the $k$-cycle $\sigma = \cycle{1, \ldots, k}$ we get
	\begin{align} \label{fix1}
		\alpha_l^{k-1}\alpha_{-(k-1)l} = 0 \text{.}
	\end{align}
	Since $\alpha_l \neq 0$ and $kl = 0$ modulo $n$, we have
	$$\alpha_{-(k-1)l} = \alpha_l = 0 \text{,}$$
	which is a contradiction.
	This shows that $l$ is coprime to $n$. In this case, by the same argument as above, the equation \eqref{fix1} holds for every $k = 1,\ldots,n-1$.
	Since $\alpha_l \neq 0$, this implies
	$$\alpha_{-(k-1)l} = 0$$
	for $k = 1,\ldots, n-1$. The elements $0,-l,-2l,\ldots,-(n-2)l$ are exactly all the integers (modulo n) in $\Z_n$, except $l$, and therefore we have
	\[
	\vecv = \alpha_l s^l \vecu \text{.}
	\]
	The coefficient at the term $x_0 x_1 \ldots x_{n-1}$ in the polynomial $g$ is
	\[
	\det\uuu + \det\vvv  = (1+ (-1)^{l(n-1)}\alpha_l^n)\det\uuu = 0 \text{.}
	\]
	Since $l$ is coprime to $n$, it is easy to see that $(-1)^{l(n-1)} = (-1)^{n-1}$. By assumption, $\vecu,s\vecu,\ldots,s^{n-1}\vecu$ are linearly independent, thus $\det\uuu \neq 0$, and therefore $(-\alpha_l)^n = 1$, which shows \eqref{l1ii}.

	To prove \eqref{dolg produkt} we proceed by induction on $k$. The $k=1$ case is trivial, so assume that $1 < k \leq n-1$ and that \eqref{dolg produkt} holds for all smaller $k$. The coefficient at the term $x_{0+\delta_0}x_{1+\delta_1} \cdots x_{n-1 + \delta_{n-1}}$ in the polynomial $g$ for
	$$\delta_i = \begin{cases} 1,& i \in \{i_1,\ldots,i_k\} \\ 0, & \text{otherwise}\end{cases}$$
	is
	\begin{align} \label{det a zero}
	\det \begin{bmatrix} \veca_0 & s \veca_1 & \ldots & s^{n-1}\veca_{n-1}\end{bmatrix} = 0
	\end{align}
	with
	\[
	\veca_i = \begin{cases} \vecv,& i \in \{i_1,\ldots,i_k\} \\ \vecu, & \text{otherwise}\end{cases} \text{.}
	\]
	By plugging in
	\[
	s^i \vecv = \sum_{j=0}^{n-1} \alpha_{j-i} s^j \vecu \text{,}
	\]
	we see that
	\begin{align*}
		\begin{bmatrix} \veca_0 & s \veca_1 & \ldots & s^{n-1}\veca_{n-1}\end{bmatrix} = \sum_{j_{i_1},\ldots,j_{i_k} = 0}^{n-1} \alpha_{j_{i_1} - i_k} \cdots \alpha_{j_{i_k} - i_k} \det \begin{bmatrix} \vecb_0 & \vecb_1 & \ldots & \vecb_{n-1}\end{bmatrix} 
	\end{align*}
	for
	\[
	\vecb_i = \begin{cases} s^{j_i} \vecu,& i \in \{i_1,\ldots,i_k\} \\ s^{i} \vecu, & \text{otherwise}\end{cases} \text{.}
	\]
	If $\{ j_{i_1}, \ldots, j_{i_k} \} \neq \{ i_1,\ldots, i_k \}$, then the matrix $\begin{bmatrix} \vecb_0 & \vecb_1 & \ldots & \vecb_{n-1}\end{bmatrix}$ contains the same vector twice and thus has determinant zero. If $\{j_{i_1}, \ldots, j_{i_k} \} = \{ i_1,\ldots, i_k \}$, i.e., $j_{i_l} = i_{\pi(l)}$ for some permutation $\pi \in S_k$, then $$\det\begin{bmatrix} \vecb_0 & \vecb_1 & \ldots & \vecb_{n-1}\end{bmatrix} = \pm \det \uuu \text{,}$$
	where the sign depends on the permutation. The equality \eqref{det a zero} thus gives us
	\begin{align*}
		\begin{bmatrix} \veca_0 & s \veca_1 & \ldots & s^{n-1}\veca_{n-1}\end{bmatrix} = \sum_{\pi \in S_k} \pm  \alpha_{i_{\pi(1)}- i_1} \cdots \alpha_{i_{\pi(k)}- i_k} \det\uuu  = 0 \text{,}
	\end{align*}
	and since, by assumption, $\det \uuu \neq 0$, we obtain	
	\begin{align} \label{vsota permutacij}
		\sum_{\pi \in S_k} \pm \alpha_\pi = \sum_{\pi \in S_k} \pm  \alpha_{i_{\pi(1)}- i_1} \cdots \alpha_{i_{\pi(k)}- i_k} = 0 \text{.}
	\end{align}
	If
	$$ \sigma = \cycle{j_{11},\ldots,j_{1{s_1}}}\cdots \cycle{{j_{t1}},\ldots,{j_{t{s_t}}}}$$
	is a decomposition into disjoint cycles, then
	\begin{align} \label{formula za alpha}
		\alpha_\sigma = \left( \alpha_{i_{j_{12}} - i_{j_{11}}} \alpha_{i_{j_{1 3}} - i_{j_{12}} } \cdots \alpha_{i_{j_{11}} - i_{j_{1{s_1}}}}\right) \cdots \left(\alpha_{ i_{j_{t2}} - i_{j_{t1}}} \cdots \alpha_{i_{j_{t1}} - i_{j_{t{s_t}}}}\right) \text{.}
	\end{align}
	This expression is zero if $\sigma$ is not a $k$-cycle. Indeed, consider $i'_l = i_{j_{1l}}$ for $l = 1,\ldots,s_1$, and
	the permutation $$\tau = \cycle{1,2,\ldots,s_1} \in S_{s_1} \text{.}$$
	Since $s_1 < k$ (because $\sigma$ is not a $k$-cycle), by the induction hypothesis we have
	\begin{align*}
		\alpha'_\tau &= \alpha_{i'_2 - i'_1} \alpha_{i'_3 - i'_2} \cdots \alpha_{i'_1 - i'_{k'}}\\
		&= \alpha_{i_{j_{12}} - i_{j_{11}}} \alpha_{i_{j_{13}} - i_{j_{12}}} \cdots \alpha_{i_{j_{11}} - i_{j_{1s_1}}} = 0  \text{.}
	\end{align*}
	The equality \eqref{formula za alpha} then shows that $\alpha_\sigma = 0$ for all $\sigma \in S_k$ that are not $k$-cycles.
	We claim that $\alpha_\sigma=0$ for all $k$-cycles $\sigma$ as well; for contradiction assume that there is a $k$-cycle $\sigma$ such that $\alpha_\sigma \neq 0$. By relabeling the indices, we may assume that $\sigma = \cycle{1, \ldots, k}$. Now take any other $k$-cycle $\rho$ and write it in the form
	$$ \rho = \cycle{1, 2, \ldots, s, j_1, j_2, \ldots, j_{k-s}} \text{,}$$
	where $j_1 \neq s+1$ and $s\geq 1$.
	Consider
	\begin{align*}
		i'_j = \begin{cases} i_j,& j \leq s \\ i_{j_1 + j - s - 1},& j > s \end{cases}
	\end{align*}
	for $j=1,\ldots,k'$, where $k' = k+s-j_1+1$, and the permutation
	$$\tau = \cycle{1,2,\ldots,k'} \in S_{k'}\text{.}$$
	Since $k' < k$, by the induction hypothesis, we have
	\begin{align*}
		\alpha'_\tau &= \alpha_{i'_2 - i'_1} \cdots \alpha_{i'_{s+1} - i'_s} \alpha_{i'_{s+2} - i'_{s+1}} \cdots \alpha_{i'_1 - i'_{k'}}\\
		&= \left( \alpha_{i_2 - i_1} \cdots \alpha_{i_{j_1} - i_s} \right) \left( \alpha_{i_{(j_1 + 1)} - i_{j_1}} \cdots \alpha_{i_1 - i_k } \right) = 0  \text{.}
	\end{align*}
	The second factor cannot be zero as it appears in $\alpha_\sigma$. Thus, $\alpha_\rho = 0$, as the first factor appears in this product. Hence, $\alpha_\pi = 0$ for all $\pi \in S_k$, except $\pi = \sigma$, but this contradicts  \eqref{vsota permutacij}.
\end{proof}

\begin{lemma} \label{circ lema}
	Let $n \in \N$, $n \geq 2$, and let $F$ be a field. Let $$\vecu = \begin{bmatrix} u_1 & u_2 & 0 & \ldots & 0 \end{bmatrix}^\intercal \in F^n \text{,}$$ and assume that the vectors $\vecu, s\vecu, \ldots, s^{n-1} \vecu$ are linearly dependent.
	Then there exists an $n$-th root of unity $\omega \in F$ such that
	\[
	u_1 + \omega u_2 = 0 \text{.}
	\]
\end{lemma}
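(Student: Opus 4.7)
The plan is to rewrite the linear dependence hypothesis as the vanishing of a single determinant, compute that determinant explicitly, and then factor it. I would form the matrix $U = \begin{bmatrix} \vecu & s\vecu & \cdots & s^{n-1}\vecu \end{bmatrix} \in M_n(F)$, so that its columns are linearly dependent precisely when $\det U = 0$. Since $\vecu = u_1 \vece_0 + u_2 \vece_1$, the $k$-th column is $s^k\vecu = u_1 \vece_k + u_2 \vece_{k+1}$ (indices in $\Z_n$), i.e., $U = u_1 I + u_2 S$, where $S$ is the cyclic shift matrix sending $\vece_i$ to $\vece_{i+1}$.

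Next I would compute $\det U$ via the Leibniz formula. The only nonzero entries of $U$ sit at positions $(i,i)$ and $(i+1,i)$, with values $u_1$ and $u_2$, so a permutation $\sigma \in S_n$ can contribute nontrivially only when $\sigma(i) \in \{i,\, i - 1\}$ for every $i \in \Z_n$. A short inductive check shows that if any index is fixed by $\sigma$ then $\sigma$ is forced to be the identity, and otherwise $\sigma$ must be the inverse cyclic shift $i \mapsto i - 1$, which is an $n$-cycle of sign $(-1)^{n-1}$. These two permutations contribute $u_1^n$ and $(-1)^{n-1} u_2^n$ respectively, yielding
\[
	\det U = u_1^n + (-1)^{n-1} u_2^n.
\]

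Finally I would extract the required root of unity. If $u_2 = 0$, then $\det U = u_1^n$ forces $u_1 = 0$, so $\vecu = 0$ and the conclusion is trivial with $\omega = 1$. Otherwise $\det U = 0$ rearranges to $(u_1/u_2)^n = (-1)^n$, so setting $\omega = -u_1/u_2 \in F$ gives $\omega^n = 1$ together with $u_1 + \omega u_2 = 0$, as required. The only mildly delicate step is the Leibniz-formula bookkeeping; as an alternative one can diagonalize $S$ over the algebraic closure to obtain $\det(u_1 I + u_2 S) = \prod_{k=0}^{n-1}(u_1 + u_2 \zeta^k)$ (with $\zeta$ a primitive $n$-th root of unity), which is a polynomial identity in $u_1, u_2$ with integer coefficients and hence valid over $F$.
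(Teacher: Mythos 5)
Your proof is correct, and its key step is genuinely more direct than the paper's. Both arguments begin with the same identity $\det U = u_1^n + (-1)^{n-1}u_2^n$, which the paper merely asserts and you actually verify: your Leibniz bookkeeping is sound, since the condition $\sigma(i)\in\{i,i-1\}$ for all $i\in\Z_n$ forces the set of non-fixed indices to be closed under subtracting $1$, hence empty or all of $\Z_n$, leaving only the identity and the $n$-cycle $i\mapsto i-1$ with sign $(-1)^{n-1}$. Where you diverge is in extracting $\omega$: the paper splits into cases according to whether $n$ is coprime to $\operatorname{char} F$, in the coprime case diagonalizing the cyclic shift over $\ov{F}$ via Vandermonde eigenvectors to produce some root of unity $\omega\in\ov{F}$ with $u_1+\omega u_2=0$ and only afterwards noting $\omega=-u_1/u_2\in F$, and in the remaining case reducing to the coprime one through the Frobenius identity $u_1^n+(-1)^{n-1}u_2^n=\bigl(u_1^m+(-1)^{m-1}u_2^m\bigr)^{p^k}$. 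You bypass all of this: from $u_1^n=(-1)^n u_2^n$ you set $\omega=-u_1/u_2$ when $u_2\neq 0$, so that $\omega^n=(-1)^n(u_1/u_2)^n=(-1)^{2n}=1$ and $u_1+\omega u_2=0$, while $u_2=0$ forces $u_1=0$ and $\omega=1$ works; this is uniform in the characteristic, never leaves $F$, and makes the separability and Frobenius discussion unnecessary. Your closing alternative (the factorization $\prod_{k}(u_1+u_2\zeta^k)$, proved over $\Z$ or $\C$ and specialized) is essentially the paper's route and is not needed for your main argument.
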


\begin{proof}
	By assumption, the matrix
	\[
	\uuu = \begin{bmatrix} \vecu & s\vecu & s^2\vecu & \ldots & s^{n-1}\vecu \end{bmatrix}\in M_n(F)
	\]
	is singular, i.e.,
	$$ \det \uuu = u_1^n + (-1)^{n-1} u_2^n = 0 \text{.}$$
	First, assume that $n$ is coprime to the characteristic of $F$.
	Then the polynomial $f(x)=x^n-1$ is separable (since it clearly has no common roots with its derivative), and thus $f$ has $n$ distinct roots in $\ov{F}$, the algebraic closure of $F$. Denote the roots of $f$ by $\omega_1,\ldots,\omega_n$.
	For $k = 1,\ldots,n$ let
	\[
	\vecw_k = \begin{bmatrix} 1 & \omega_k & \omega_k^2 & \ldots & \omega_k^{n-1} \end{bmatrix}^\intercal \in \ov{F}^n \text{.}
	\]
	It is easy to see that $s^{-j} \vecw_k = \omega_k^j \vecw_k$ for any $j \in \Z_n$.
	Hence, we have
	\begin{align*}
		(s^j \vecu)^\intercal \vecw_k = \vecu^\intercal (s^{-j} \vecw_k) 
		= \omega_k^j \vecu^\intercal \vecw_k
	\end{align*}
	for $j \in \Z_n$, and thus $\uuu^\intercal \vecw_k = (\vecu^\intercal \vecw_k) \vecw_k$.
	This tells us that $\vecw_k$ for $k =1,\ldots,n$ are eigenvectors of the matrix
	$\uuu^\intercal$
	for the eigenvalues
	\[
	\lambda_k = \vecu^\intercal \vecw_k \text{.}
	\]
	Since the vectors $\vecw_k$ for $k =1,\ldots,n$ form a Vandermonde matrix and are all different, they are linearly independent. Therefore, $\lambda_k$ are exactly all eigenvalues of the matrix $\uuu^\intercal$. Since, by assumption, $\uuu$ is singular, so is $\uuu^\intercal$, and thus at least one of $\lambda_k$ must be $0$, i.e., for $\omega = \omega_k \in \ov{F}$ we have
	$$ u_1 + \omega u_2 = 0 \text{.}$$
	If $u_2 \neq 0$, then $\omega  = - \frac{u_1}{u_2} \in F$. If $u_2 = 0$, then we can pick $\omega =1$ instead, which again lies in $F$.
	
	Now consider the general case. Let $p$ be the characteristic of $F$. If $p = 0$, we are done, so assume that $p > 0$. Write $n = p^k m$ for a non-negative integer $k$ and an $m \in \N$ coprime to $p$. We can see that
	$$ (u_1^m + (-1)^{m-1} u_2^m)^{p^k} = u_1^n + (-1)^{n - p^k} u_2^n \text{.}$$
	Observe that $(-1)^{p^k} = -1$. Therefore, we have
	$$ \det \uuu = u_1^n + (-1)^{n-1} u_2^n = (u_1^m + (-1)^{m-1} u_2^m)^{p^k} \text{.}$$
	Since $\uuu$ is singular, it follows that
	$$ u_1^m + (-1)^{m-1}u_2^m = 0\text{.}$$
	Because $m$ is coprime to $p$, by the previously proven case, there exists an $m$-th root of unity $\omega \in F$ such that
	$$u_1 + \omega u_2 = 0 \text{.}$$
	Since $m$ divides $n$, $\omega$ is also an $n$-th root of unity, which concludes the proof.
\end{proof}

\begin{lemma} \label{posledica1}
	Let $n \in \N$, $n \geq 3$, let $F$ be any field, and let $K$ be an infinite field extending $F$. Let
	$$\vecu = \begin{bmatrix} u_1 & u_2 & 0 & \ldots & 0 \end{bmatrix}^\intercal \in F^n  \quad\mbox{and}\quad  \vecv = \begin{bmatrix} v_1 & v_2 & 0 & \ldots & 0 \end{bmatrix}^\intercal \in F^n \text{.}$$
	Assume that
	\begin{align} \label{vsota je nic}
	 u_1 + u_2 + v_1 + v_2 = 0 \text{.}
	\end{align}
	If the matrix
	$$\aaa(x_0,\ldots,x_{n-1}) = \begin{bmatrix} x_0 \vecu + x_{1} \vecv & s(x_1 \vecu + x_{2} \vecv) & \ldots & s^{n-1}(x_{n-1}\vecu + x_{0} \vecv) \end{bmatrix}$$
	is non-invertible for all $x_0, \ldots, x_{n-1} \in K^\times$, then one of the following holds:
	\begin{enumerate}[label=(\alph*)]
		\item \label{c1i} There is an $n$-th root of unity $\omega \in F$ such that
		\begin{align*}
			u_1 + \omega u_2 &= 0 \text{,}\\
			v_1 + \omega v_2 &= 0 \text{.}
		\end{align*}
		
		\item \label{c1ii} We have
		\begin{align*}
			u_1 + v_2 &= 0 \text{,} \\
			u_2 = v_1 &= 0 \text{.}
		\end{align*}
	
		\item \label{c1iii} We have
		\begin{align*}
			u_1 = v_2 &= 0 \text{,} \\
			u_2 + v_1 &= 0 \text{.}
		\end{align*}
	\end{enumerate}
\end{lemma}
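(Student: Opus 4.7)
The plan is to combine Lemmas~\ref{osnovna lema} and~\ref{circ lema} by splitting on whether the cyclic shifts of $\vecu$ (and of $\vecv$) are linearly independent, aided by an explicit formula for $\det \aaa$. Since $\vecu$ and $\vecv$ are supported on the first two standard basis vectors, each column $s^i(x_i\vecu+x_{i+1}\vecv)$ of $\aaa$ is supported on rows $\{i, i+1 \bmod n\}$, so $\aaa$ is a cyclic bidiagonal matrix with diagonal entries $x_iu_1+x_{i+1}v_1$ and first subdiagonal entries $x_iu_2+x_{i+1}v_2$. A direct cofactor expansion along the first row yields
\[
\det\aaa(x_0,\ldots,x_{n-1}) \;=\; \prod_{i=0}^{n-1}(x_iu_1+x_{i+1}v_1) \;+\; (-1)^{n-1}\prod_{i=0}^{n-1}(x_iu_2+x_{i+1}v_2),
\]
and since $K$ is infinite, the hypothesis $S=\emptyset$ forces this polynomial in $x_0,\ldots,x_{n-1}$ to vanish identically. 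Moreover, the substitution $x_j\mapsto x_{-j}$ transforms each product on the right hand side into its counterpart under the exchange $\vecu\leftrightarrow\vecv$, so the hypothesis $S=\emptyset$ is itself symmetric in $\vecu,\vecv$.

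If $\vecu, s\vecu, \ldots, s^{n-1}\vecu$ are linearly independent, Lemma~\ref{osnovna lema} provides an $n$-th root of unity $\omega\in F$ and $i\in\Z_n^\times$ with $\vecv=-\omega s^i\vecu$. Since $\vecu\neq 0$ and $\vecv$ is supported in $\{0,1\}$ while $s^i\vecu$ is supported in $\{i, i+1\bmod n\}$, the assumption $n\geq 3$ forces either $i\equiv 1$ (whence $u_2=0$, $v_1=0$, $v_2=-\omega u_1$) or $i\equiv -1$ (whence $u_1=0$, $v_2=0$, $v_1=-\omega u_2$); the value $i=0$ is ruled out because it is not coprime to $n$, and intermediate $i$ would force $\vecu=0$. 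Plugging these relations into~\eqref{vsota je nic} gives $(1-\omega)u_1=0$ or $(1-\omega)u_2=0$, and as the relevant coordinate is nonzero, $\omega=1$; this is precisely case~\ref{c1ii} or case~\ref{c1iii}. By the $\vecu\leftrightarrow\vecv$ symmetry noted above, the same conclusion holds when instead $\vecv, s\vecv, \ldots, s^{n-1}\vecv$ are linearly independent.

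The only remaining case is when both cyclic families are linearly dependent. Lemma~\ref{circ lema} then provides $n$-th roots of unity $\omega_1,\omega_2\in F$ with $u_1+\omega_1 u_2=0$ and $v_1+\omega_2 v_2=0$; the degenerate subcases $\vecu=0$ or $\vecv=0$ immediately yield case~\ref{c1i} with $\omega=1$ (via~\eqref{vsota je nic}), so one may assume $u_2, v_2\in F^\times$. Substituting $u_1=-\omega_1 u_2$, $v_1=-\omega_2 v_2$ into the vanishing determinant identity and using $\omega_1^n=1$ to pull a factor out of the first product reduces it to
\[
\prod_{i=0}^{n-1}(u_2 x_i+\theta v_2 x_{i+1}) \;=\; \prod_{i=0}^{n-1}(u_2 x_i+v_2 x_{i+1}),\qquad \theta:=\omega_2/\omega_1.
\]
I would then compare the coefficient of the monomial $x_1^2 x_2\cdots x_{n-1}$, which on each side arises from the unique expansion choice (pick the $v_2$-term from the $i=0$ factor and the $u_2$-term from every other factor), yielding $\theta u_2^{n-1}v_2=u_2^{n-1}v_2$ and hence $\omega_1=\omega_2$; this is case~\ref{c1i}. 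I expect the technical heart to be this last coefficient comparison: one must pick a monomial whose expansion is uniquely determined on both sides in order to read off $\theta=1$ cleanly, and the hypothesis $n\geq 3$ is crucial both here and in ruling out the intermediate values of $i$ in the previous step.
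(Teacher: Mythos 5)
Your proof is correct, but it reaches case \ref{c1i} by a different mechanism than the paper. Both arguments handle the case where $\vecu,s\vecu,\ldots,s^{n-1}\vecu$ are independent via Lemma \ref{osnovna lema} with the same support analysis (your observation that \eqref{vsota je nic} then forces $\omega=1$, since the surviving coordinate of $\vecu$ is nonzero, is a harmless variant of the paper's fallback to case \ref{c1i} with $\omega=1$ when $\omega\neq 1$). The divergence is in the dependent case: the paper splits only on the $\vecu$-family and, when it is dependent, never looks at the shifts of $\vecv$ at all --- it reuses the coefficient of $x_1^2x_2\cdots x_{n-1}$ from the proof of Lemma \ref{osnovna lema} (which needs only $s\vecu,\ldots,s^{n-1}\vecu$ independent) to conclude $\vecv\in{\rm span}\{s\vecu,\ldots,s^{n-1}\vecu\}$, and then transfers the relation $u_1+\omega u_2=0$, read as $\vecu^\intercal\vecw=0$ with $\vecw=\begin{bmatrix}1&\omega&\ldots&\omega^{n-1}\end{bmatrix}^\intercal$, to $\vecv$, so the same $\omega$ serves both vectors at once (the degenerate subcase $s\vecu,\ldots,s^{n-1}\vecu$ dependent forces $\vecu=0$). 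You instead distinguish three cases, need the substitution $x_j\mapsto x_{-j}$ to reduce the ``only the $\vecv$-family independent'' case to the first one, and in the ``both dependent'' case apply Lemma \ref{circ lema} twice, which produces two a priori different roots of unity that must then be reconciled; for this you use the explicit two-term formula $\det\aaa=\prod_i(x_iu_1+x_{i+1}v_1)+(-1)^{n-1}\prod_i(x_iu_2+x_{i+1}v_2)$ (which the paper never writes down but which is correct for this cyclic bidiagonal matrix) and compare coefficients of $x_1^2x_2\cdots x_{n-1}$, whose expansion is indeed unique, to get $\omega_1=\omega_2$ after noting $u_2v_2\neq0$ once $\vecu=0$ and $\vecv=0$ are excluded. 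Your route costs an extra case split and the matching step $\omega_1=\omega_2$, but buys a concrete closed form that makes both the $\vecu\leftrightarrow\vecv$ symmetry and the final coefficient comparison transparent; the paper's span-plus-orthogonality argument avoids both at the price of quoting an intermediate step from inside the proof of Lemma \ref{osnovna lema}. All the steps you outline check out, so the argument is complete.
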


\begin{proof}
	First, consider the case where $\vecu,s\vecu,\ldots,s^{n-1}\vecu$ are linearly dependent. If $s\vecu, \ldots, s^{n-1}\vecu$ are also linearly dependent, then $\vecu = 0$, and \eqref{vsota je nic} implies the case \ref{c1i} for $\omega=1$; hence, assume that $s\vecu, \ldots, s^{n-1}\vecu$ are linearly independent. By Lemma \ref{circ lema}, there is an $n$-th root of unity $\omega \in F$ such that $$u_1 + \omega u_2 = 0\text{,}$$
	i.e., for the vector $\vecw = \begin{bmatrix} 1 & \omega & \ldots & \omega^{n-1} \end{bmatrix} \in F^n$ we have $$\vecu^\intercal \vecw = 0 \text{.}$$
	Note that the same argument as in the proof of Lemma \ref{osnovna lema} shows that \eqref{razvoj v} holds.
	For $i \in \Z_n$, we have
	$$ (s^i \vecu)^\intercal \vecw = \vecu^\intercal (s^{-i}\vecw) = \omega^i \vecu^\intercal \vecw = 0 \text{,}$$
	and thus \eqref{razvoj v} implies $\vecv^\intercal \vecw = 0$, i.e.,
	$$ v_1 + \omega v_2 = 0 \text{.}$$
	This gives \ref{c1i}.
	
	Now assume that $\vecu,s\vecu,\ldots,s^{n-1}\vecu$ are linearly independent.
	By Lemma \ref{osnovna lema}, there exist an $i \in \Z_n^\times$ and an $n$-th root of unity $\omega \in F$ such that $$\vecv = - \omega s^i \vecu\text{.}$$
	If $i = 1$, then
	\begin{align*}
		\omega u_1 + v_2 &= 0 \text{,} \\
		u_2 = v_1 &= 0 \text{.}
	\end{align*}
	If $i = -1$, then \begin{align*}
		u_1 = v_2 &= 0 \text{,} \\
		\omega u_2 + v_1 &= 0 \text{.}
	\end{align*}
	In all other cases $u_1,u_2,v_1,v_2$ are zero, and each of the previous identities are satisfied. If $\omega = 1$, we have either \ref{c1ii} or \ref{c1iii}; hence, assume that $\omega \neq 1$. Adding the two equations in each case we get
	$$ \omega (u_1 + u_2) + v_1 + v_2 = 0\text{.}$$
	Combining this with \eqref{vsota je nic}, we see that $u_1 + u_2 = 0$ and $v_1 + v_2 = 0$, which is just \ref{c1i} for $\omega = 1$.
\end{proof}

\begin{lemma} \label{odcepljena lema}
	Let $n \in \N$ and let $K$ be an infinite field. Let $\vecu, \vecv \in K^n$, and assume that $s \vecu,\ldots,s^{n-1} \vecu$ are linearly independent. For $x_0,\ldots,x_{n-1} \in K^\times$, let
	$$\aaa(x_0,\ldots,x_{n-1}) = \begin{bmatrix} x_0 \vecu + x_{1} \vecv & s(x_1 \vecu + x_{2} \vecv) & \ldots & s^{n-1}(x_{n-1}\vecu + x_{0} \vecv) \end{bmatrix} \text{.}$$
	Then there is a Zariski open non-empty set $S \subseteq (K^\times)^n$ such that for any $(x_0, \ldots, x_{n-1}) \in S$ the kernel of $\aaa(x_0,\ldots,x_{n-1})$ is either trivial or spanned by a vector with non-zero entries.
\end{lemma}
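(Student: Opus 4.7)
The plan is to show that for each $j \in \{0, 1, \ldots, n-1\}$, the set of $x \in K^n$ such that the $n \times (n-1)$ matrix obtained from $\aaa(x)$ by deleting the $j$-th column has rank exactly $n - 1$ is a Zariski open dense subset $S'_j$ of $K^n$. Then $S' = \bigl(\bigcap_j S'_j\bigr) \cap (K^\times)^n$ is Zariski open in $(K^\times)^n$, and non-empty since $K$ is infinite. This suffices: on $S'$ the kernel of $\aaa(x)$ has dimension at most $1$, and if the kernel is spanned by a non-zero $\vec z$, then a vanishing coordinate $z_j = 0$ would give a non-trivial dependence among the columns other than the $j$-th, contradicting the assumed rank.

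To prove that each $S'_j$ is Zariski open dense, I identify a particular coefficient of an $(n-1) \times (n-1)$ minor of the submatrix (regarded as a polynomial in $x_0, \ldots, x_{n-1}$). Fix a row index $r$ and consider the minor obtained by deleting row $r$: its entries at row $r' \neq r$ and column $i \neq j$ are $x_i u_{r'-i} + x_{i+1} v_{r'-i}$, where $u_k = (\vecu)_k$ and $v_k = (\vecv)_k$ (indices modulo $n$). Expanding the determinant multilinearly in the columns and collecting the coefficient of $\prod_{i \neq j} x_i$ forces each column $i$ to contribute its $x_i$ term: one checks that the only bijection $\phi : \{0, \ldots, n-1\}\setminus\{j\} \to \{0, \ldots, n-1\}\setminus\{j\}$ with $\phi(i) \in \{i, i+1 \bmod n\}$ is the identity, by successively forcing $\phi(j+1) = j+1$, $\phi(j+2) = j+2$, and so on around the cycle. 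The coefficient therefore equals the $(n-1)\times(n-1)$ determinant of $[s^i \vecu : i \neq j]$ with row $r$ removed, and is non-zero for some $r$ if and only if the $n \times (n-1)$ matrix $[s^i \vecu : i \neq j]$ has rank $n - 1$.

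The main obstacle is to establish this rank assertion for every $j$. If $\vecu, s\vecu, \ldots, s^{n-1}\vecu$ are linearly independent, then any $n - 1$ of them are, so the claim is immediate. Otherwise $\vecu \in \mathrm{span}(s\vecu, \ldots, s^{n-1}\vecu)$, and because $s\vecu, \ldots, s^{n-1}\vecu$ are independent by hypothesis, the space of linear dependencies on the $n$ vectors $\{s^i \vecu\}_{i=0}^{n-1}$ is one-dimensional, spanned by some $\vec\alpha = (\alpha_0, \ldots, \alpha_{n-1})$ with $\alpha_0 \neq 0$. The key point is that applying $s$ to the relation $\sum_k \alpha_k s^k \vecu = 0$ yields the relation $\sum_k \alpha_{k-1} s^k \vecu = 0$, whose coefficient vector is the cyclic shift of $\vec\alpha$; by one-dimensionality of the relation space this shift equals $c\vec\alpha$ for some scalar $c$. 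Comparing components yields $\alpha_k = \alpha_0 c^{-k}$ with $c^n = 1$, so in particular $\alpha_k \neq 0$ for every $k$. Hence no non-trivial relation among $\{s^i \vecu\}_i$ vanishes at position $j$, and $\{s^i \vecu : i \neq j\}$ is linearly independent, as required.
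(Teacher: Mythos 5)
Your proof is correct and follows essentially the same route as the paper: for each deleted column $j$ you exhibit a generically invertible $(n-1)\times(n-1)$ minor, intersect the resulting Zariski open sets, and conclude that a one-dimensional kernel cannot contain a vector with a vanishing coordinate. The only real difference is in the auxiliary rank claim: where you analyze the one-dimensional relation space of $\vecu, s\vecu, \ldots, s^{n-1}\vecu$ to see that any $n-1$ of these vectors are independent, the paper gets this in one line from cyclic symmetry (the matrix $[\,s^k\vecu : k\neq j\,]$ is $s^j$ applied to a column permutation of $[\,s^k\vecu : k\neq 0\,]$, so an invertible minor $\uuu^{i+j,j}$ exists for every $j$), and your extraction of the coefficient of $\prod_{i\neq j}x_i$ plays the same role as the paper's specialization of the ratios $x_{i+1}/x_i$ to $0$.
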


\begin{remark}
	By symmetry, the lemma also holds if $s \vecv,\ldots,s^{n-1} \vecv$ are linearly independent.
\end{remark}

\begin{proof}
	Let $\uuu = \begin{bmatrix} \vecu & s \vecu & \ldots & s^{n-1}\vecu \end{bmatrix}$ and $\vvv = \begin{bmatrix} \vecv & s \vecv & \ldots & s^{n-1}\vecv \end{bmatrix}$.
	Denote by $\aaa^{ij}$ the matrix obtained from $\aaa$ by omitting the $i$-th row and the $j$-th column, and similarly for the other matrices.
	For the matrix
	$$\bbb = \uuu + \vvv \diag\left(\frac{x_1}{x_0},\frac{x_2}{x_0}, \ldots,\frac{x_0}{x_{n-1}}\right)$$
	we have $\aaa = \bbb \diag(x_0,\ldots,x_{n-1})$, and thus for $x_0,\ldots,x_{n-1} \in K^\times$ the rank of $\aaa$ is the same as the rank of $\bbb$, and $\bbb^{ij}$ is invertible if and only if $\aaa^{ij}$ is invertible.
	Since $s \vecu,\ldots,s^{n-1} \vecu$ are linearly independent, the matrix $\uuu$ without the first column has rank $n-1$. This means that there is an $i \in \Z_n$ such that $\uuu^{i0}$ is invertible. Hence, $\uuu^{i+j,j}$ is invertible for any $j \in \Z_n$.
	We see that
	$$\bbb^{i+j,j} = \uuu^{i+j,j} + \vvv^{i+j,j} \ddd^{jj}$$
	for $\ddd = \diag\left(y_0,\ldots,y_{n-1}\right)$ with $y_i = \frac{x_{i+1}}{x_i}$. Let
	$$S_j = \left\{ (x_0,\ldots,x_{n-1}) \in (K^\times)^n \mid \aaa^{i+j,j} \text{ is invertible}\right\} \text{.}$$
	The sets $S_j$ are obviously Zariski open; we will prove that they are non-empty for any $j \in \Z_n$. By symmetry, we only need to consider the $j=0$ case.
	The polynomial $$g(y_1,\ldots,y_{n-1}) =\det \bbb^{i0}$$ is non-zero as $g(0,\ldots,0) = \det \uuu^{i0} \neq 0$, so there exists a choice of non-zero $y_1,\ldots,y_{n-1} \in K^\times$ such that $\bbb^{i0}$ is invertible (since $K$ is infinite). Thus, for $x_1 = 1$, $x_i = y_1 \cdots y_{i-1}$ for $i = 2,\ldots,n-1$, and $x_0 = y_1 \cdots y_{n-1}$ the matrix $\aaa^{i0}$ is invertible, which proves $S_0 \neq \emptyset$. Since $K$ is infinite, this means that $S = \cap_{j=0}^{n-1} S_j$ is a non-empty Zariski open set which has the desired property. Indeed, for any $(x_0,\ldots,x_{n-1}) \in S$, the matrix $\aaa^{i0}$ is invertible, and thus the kernel of $\aaa$ is at most one-dimensional. If it contained a non-zero vector with $j$-th zero entry, then the matrix $\aaa$ without the $j$-th column would have non-trivial kernel, which cannot hold since $\aaa^{i+j,j}$ is invertible. Hence, the kernel of $\aaa$ is either trivial or spanned by a single vector with non-zero entries.
\end{proof}

We are now in position to prove the key proposition of this section.
Let $K$ be an algebraically closed field and $n \in \N$. For $i,j \in \Z_n$, denote by $\eee_{ij}$ the  matrix in $M_n(K)$, corresponding to the linear map given by $ \vece_k \mapsto \delta_{jk} \vece_i$ for $k \in \Z_n$. Of course, $\eee_{ij}$ are essentially the standard matrix units, but they are indexed by $\Z_n$ rather than the positive integers. 
Define
$$N = \left\{\sum_{i=0}^{n-1} \nu_i \eee_{i,i+1} \, \middle\vert \, \nu_i \in K \text{ such that \textbf{not} only one of $\nu_i$ is non-zero} \right\}$$
and
$$J_n(K) = \left\{\sum_{i=0}^{n-1} d_i \eee_{ii} + \nnn \, \middle\vert \, d_i \in K, \nnn \in N \right\} \subseteq M_n(K) \text{.}$$
For example, this means that $0 \in N$, $\eee_{0,1} \not\in N$, and $\eee_{0,1} + \eee_{1,2} \in N$. The set $J_n(K)$ contains all $n \times n$ Jordan forms, except those that consist of a single $2 \times 2$ Jordan block and a diagonal block, which, as we will later see, are always contained in the image of $f$ (if $n \geq 4$). Hence, if the image of a noncommutative polynomial $f$ in $M_{n}(K)$ contains $J_n(K)$, the polynomial $f$ is surjective in $M_{n}(K)$. The following proposition shows that if the image of $f$ does not contain $J_n(K)$, its coefficients must satisfy many equations.

\begin{proposition} \label{pomembna lema}
	Let $n \in \N$, $n \geq 3$, let $F$ be any field, and let $K$ be an infinite field extending $F$. Let
	\[
	f(x,y,z) = \lambda_1 xyz + \lambda_2 yzx + \lambda_3 zxy + \mu_1 zyx + \mu_2 xzy + \mu_3 yxz
	\]
	for some $\lambda_i, \mu_i \in F$. If
	\[
	J_n(K) \not\subseteq f(M_n(K)) \text{,}
	\]
	then for every permutation $\rho \in \{ {\rm id}, \cycle{1, 2, 3}, \cycle{1, 3, 2}\}$ one of the following holds:
	\begin{enumerate}[label=(\roman*)]
		\item \label{i}
		There exists an $n$-th root of unity $\omega \in F$ such that
		\begin{align*}
			\lambda_{\rho(1)} + \lambda_{\rho(2)} + \omega \lambda_{\rho(3)} &= 0 \text{,}\\
			\mu_{\rho(1)} + \mu_{\rho(2)} + \omega^{-1} \mu_{\rho(3)} &= 0 \text{.}
		\end{align*}
	
		\item
		\label{ii1}
		We have
		\begin{align*}
			\lambda_{\rho(1)} + \lambda_{\rho(2)} + \mu_{\rho(1)} + \mu_{\rho(2)} &= 0 \text{,}\\
			\lambda_{\rho(3)} = \mu_{\rho(3)} &= 0 \text{.}
		\end{align*}
	
		\item
		\label{ii2}
		We have
		\begin{align*}
			\lambda_{\rho(1)} + \lambda_{\rho(2)} &= 0 \text{,}\\
			\mu_{\rho(1)} + \mu_{\rho(2)} &= 0 \text{,} \\
			\lambda_{\rho(3)} + \mu_{\rho(3)} &= 0 \text{.}
		\end{align*}

		\item \label{iii}
		We have
		\begin{align*}
			\lambda_{\rho(1)} = \mu_{\rho(1)} &= 0 \text{,}\\
			\mu_{\rho(2)} + \lambda_{\rho(3)} &= 0 \text{,}\\
			\lambda_{\rho(2)} + \mu_{\rho(3)} &= 0 \text{.}
		\end{align*}
	\end{enumerate}
\end{proposition}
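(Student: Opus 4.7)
The plan is to follow the outline from the introduction. Observe that the image $f(M_n(K))$ is unchanged when the three arguments of $f$ are cyclically relabeled: replacing $f(x,y,z)$ by $f(y,z,x)$ or $f(z,x,y)$ gives a new polynomial with the same image but with the coefficient triples $(\lambda_1,\lambda_2,\lambda_3)$ and $(\mu_1,\mu_2,\mu_3)$ cyclically permuted. The hypothesis $J_n(K) \not\subseteq f(M_n(K))$ is therefore invariant under these relabelings, so it suffices to prove the proposition for $\rho = \mathrm{id}$; the two remaining cases follow by applying the same argument to $f(y,z,x)$ and $f(z,x,y)$.

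For the $\rho = \mathrm{id}$ case, fix $\aaa = \ddd + \nnn \in J_n(K) \setminus f(M_n(K))$, let $\rrr \in M_n(K)$ be the cyclic permutation matrix realizing the shift $s$, and substitute
\[
\xxx = \diag(\vecx), \quad \yyy = \diag(\vecy)\rrr, \quad \zzz = \diag(\vecz)\rrr^{-1}, \quad \www = \diag(\vecw).
\]
By the $\Z_n$-grading by powers of $\rrr$, the expression $f(\xxx,\yyy,\zzz)$ lives on the main diagonal while $f(\xxx,\yyy,\www)$ lives on the first superdiagonal, so $f(\xxx,\yyy,\zzz+\www)=\aaa$ decouples into two $n \times n$ linear systems $\bbb\vecz=\vecd$ and $\ccc\vecw=\vecn$, with $\bbb,\ccc$ having entries polynomial in $\vecx,\vecy$. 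A direct expansion shows that column $\ell$ of $\bbb$ equals $y_{\ell-1}\cdot s^\ell(x_{\ell-1}\vecu + x_\ell \vecv)$ with $\vecu = (\lambda_1+\lambda_2)\vece_{-1} + \lambda_3 \vece_0$ and $\vecv = \mu_3 \vece_{-1} + (\mu_1+\mu_2)\vece_0$. Factoring out $\diag(\vecy)$, reindexing the $x$-variables, and rotating the basis one step so that the support of $\vecu, \vecv$ lands on coordinates $0$ and $1$, the matrix $\bbb$ takes exactly the form of the matrix in Lemma~\ref{posledica1}; the analysis of $\ccc$ is analogous.

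If some $(\vecx,\vecy)$ made both $\bbb$ and $\ccc$ invertible the decoupled systems would be solvable and $\aaa$ would lie in $f(M_n(K))$, contradicting the hypothesis. Hence $(\det\bbb)(\det\ccc)\equiv 0$ on $K^{2n}$; since $K$ is infinite and $K[\vecx,\vecy]$ is an integral domain, one of $\det\bbb,\det\ccc$ is identically zero, and by the structural symmetry we may suppose $\det\bbb\equiv 0$. If the total coefficient sum $\tau = \sum_i(\lambda_i + \mu_i)$, which equals $u_1+u_2+v_1+v_2$, vanishes, then Lemma~\ref{posledica1} applies and its three conclusions (a), (b), (c) read off verbatim as cases (i), (ii), (iii) of the proposition. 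When $\tau \neq 0$ Lemma~\ref{posledica1} is unavailable and one must revert to Lemmas~\ref{osnovna lema} and \ref{circ lema}: if $\vecu, s\vecu, \ldots, s^{n-1}\vecu$ are linearly dependent, Lemma~\ref{circ lema} provides an $n$-th root of unity $\omega$ with $(\lambda_1+\lambda_2) + \omega \lambda_3 = 0$, and pushing the relation $\vecv \in \mathrm{span}(s^i \vecu)$ that emerges in the proof of Lemma~\ref{osnovna lema} through to $\vecv$ yields the companion equation on the $\mu$'s, i.e., case (i); otherwise Lemma~\ref{osnovna lema} forces $\vecv = -\omega s^i \vecu$ for some $i \in \Z_n^\times$, and the two-coordinate support of both $\vecu$ and $\vecv$ restricts $i$ to $\pm 1$, producing cases (ii), (iii), or (iv). Lemma~\ref{odcepljena lema} enters to certify that the Zariski-open set on which $\bbb$ has its generic rank is non-empty, legitimizing the preceding arguments. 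The main obstacle is this final sparsity step: case (iv) is absent from Lemma~\ref{posledica1} and must be extracted by carefully tracking how the admissible shifts $i \in \{1, n-1\}$ interact with the small supports of $\vecu$ and $\vecv$, potentially in combination with a parallel analysis of $\ccc$.
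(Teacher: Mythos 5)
Your reduction to $\rho=\mathrm{id}$, the choice of staircase substitutions, and the decoupling into a diagonal system and a superdiagonal system all match the intended strategy, but the proof breaks at the symmetric treatment of the two systems. From a fixed $\aaa=\ddd+\nnn\notin f(M_n(K))$ you only get $(\det\bbb)(\det\ccc)\equiv 0$, and there is no ``structural symmetry'' allowing you to assume $\det\bbb\equiv 0$: the matrix $\bbb$ is built from the vectors $(\lambda_1+\lambda_2,\lambda_3,0,\dots)$, $(\mu_3,\mu_1+\mu_2,0,\dots)$, while $\ccc$ is built from $(\mu_2+\lambda_3,\lambda_1,0,\dots)$, $(\mu_1,\lambda_2+\mu_3,0,\dots)$, and the degeneracy conditions they produce are different sets of equations. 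Worse, in the branch $\det\ccc\equiv 0$ the conclusions of Lemma \ref{posledica1} applied to $\ccc$ read, e.g., $\mu_2+\lambda_3+\omega\lambda_1=0$ and $\mu_1+\omega(\lambda_2+\mu_3)=0$, which are \emph{not} among the cases \ref{i}--\ref{iii} of the proposition for $\rho=\mathrm{id}$; case \ref{iii} is the far stronger assertion that $\ccc$ vanishes identically (all four entries of its generating vectors are zero). So the dichotomy ``$\det\bbb\equiv 0$ gives (i)--(iii), $\det\ccc\equiv 0$ gives (iv)'' cannot yield the stated proposition, and your closing sentence (``the main obstacle is this final sparsity step\dots'') concedes exactly this unproved step. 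The unnecessary branch $\tau\neq 0$ can simply be discarded: a non-zero coefficient sum makes $f$ surjective (evaluate at two identity matrices), contradicting $J_n(K)\not\subseteq f(M_n(K))$.

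What is missing is the asymmetric mechanism that makes case \ref{iii} come out: the superdiagonal system never needs $\ccc$ to be invertible. If \ref{iii} fails, at least one of the two generating vectors of $\ccc$ is non-zero, hence its $n-1$ shifts are linearly independent, and Lemma \ref{odcepljena lema} (whose role is precisely this, not to ``certify the generic rank of $\bbb$'') gives a non-empty Zariski-open set of $\vecx$ on which the kernel of the coefficient matrix is trivial or spanned by a vector $\vecb$ with \emph{all} entries non-zero. Since $\nnn\in N$ is either zero or has at least two non-zero superdiagonal entries --- this is exactly why $J_n(K)$ excludes single-entry nilpotent parts --- one can choose $\vec{\eta}$ supported on the support of $\vec{\nu}$ with $\vec{\eta}^{\intercal}\vecb=0$, solve the system with right-hand side $\vec{\eta}$, and then use the still-free parameters $y_i$, which rescale the superdiagonal equations row by row, to convert $\vec{\eta}$ into $\vec{\nu}$; finally one intersects the two non-empty Zariski-open sets of admissible $\vecx$. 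Your argument never invokes the defining property of $N$, never exploits the freedom in the $y_i$, and assigns Lemma \ref{odcepljena lema} the wrong role, so the regime $\det\ccc\equiv 0$ but $\ccc\not\equiv 0$ is left unhandled and case \ref{iii} is never actually extracted.
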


\begin{proof}
	We will only consider the $\rho = {\rm id}$ case; to obtain the other cases one only has to cyclically permute the variables $x,y,z$ (this induces a cyclic permutation of $\lambda_1, \lambda_2, \lambda_3$, and $\mu_1,\mu_2,\mu_3$). For contradiction assume that none of the cases \ref{i}, \ref{ii1}, \ref{ii2}, or \ref{iii} hold. Set
	\begin{align*}
		\xxx &= \sum_{i=0}^{n-1} x_i \eee_{ii} \text{,} \\
		\yyy &= \sum_{i=0}^{n-1} y_i \eee_{i, i + 1} \text{,}\\
		\zzz &= \sum_{i=0}^{n-1} \frac{z_{i-1}}{y_{i-1}} \eee_{i,i-1} \text{,} \\
		\www &= \sum_{i=0}^{n-1} w_{i} \eee_{ii} \text{,}
	\end{align*}
	where $x_i,y_i \in K^\times$ and $z_i, w_i \in K$ have yet to be determined.
	Take an arbitrary matrix
	$$\bbb = \ddd + \nnn \in J_n(K)$$
	with a diagonal matrix $\ddd$ and $\nnn \in N$. The equation $f(\xxx,\yyy, \zzz + \www) = \bbb$ is equivalent to the system of equations
	\begin{align} \label{sistem}
		f(\xxx,\yyy,\zzz) = \ddd  \quad \text{and} \quad f(\xxx,\yyy,\www) = \nnn \text{.}
	\end{align}

	First, we will solve the first equation of the system \eqref{sistem}. We can see that
	\begin{align*}
		f(\xxx,\yyy,\zzz) = \sum_{i=0}^{n-1} z_{i} \Big( &\, x_{i} ((\lambda_1 + \lambda_2) \eee_{ii} + \lambda_3 \eee_{i+1,i+1}) \\
		+ &\, x_{i+1} (\mu_3 \eee_{ii} + (\mu_2 + \mu_1) \eee_{i+1,i+1}) \Big) \text{.}
	\end{align*}
	If, for a vector $\vecb$, we denote by $\vecb_i$ the $i$-th entry of $\vecb$, the above equality can be further written as
	\begin{align}\label{enakost iz katere sledi}
		f(\xxx,\yyy,\zzz) = \sum_{i=0}^{n-1} (\aaa' \vecz)_i \eee_{ii}
	\end{align}
	for $\vecz = \begin{bmatrix} z_0 & z_1 & \ldots & z_{n-1}\end{bmatrix}^\intercal \in K^n$ and
	$$\aaa' = \begin{bmatrix} x_0 \vecu' + x_{1} \vecv' & s(x_1 \vecu' + x_{2} \vecv') & \ldots & s^{n-1}(x_{n-1}\vecu' + x_{0} \vecv') \end{bmatrix} \in M_n(K)$$
	with
	\begin{align*}
		\vecu' &= \begin{bmatrix} (\lambda_1 + \lambda_2) & \lambda_3 & 0 & \ldots & 0 \end{bmatrix}^\intercal \in F^n \text{,} \\
		\vecv' &= \begin{bmatrix} \mu_3 & (\mu_2 + \mu_1) & 0 & \ldots & 0 \end{bmatrix}^\intercal \in F^n \text{.}
	\end{align*}
	For $\ddd = \sum_{i=0}^{n-1} d_i \eee_{ii}$, write $\vecd = \begin{bmatrix} d_0 & d_1 & \ldots & d_{n-1}\end{bmatrix}^\intercal \in K^n$. The equality \eqref{enakost iz katere sledi} shows that solving $f(\xxx,\yyy,\zzz) = \ddd$ is equivalent to solving
	\begin{align} \label{vektorska enacba 1}
		\aaa' \vecz = \vecd \text{.}
	\end{align}
	We can assume that
	$$ \lambda_1 + \lambda_2 + \lambda_3 + \mu_1 + \mu_2 + \mu_3 =  0\text{,}$$
	since otherwise $f$ is surjective, which contradicts $J_n(K) \not\subseteq f(M_n(K))$. The cases \ref{c1i}, \ref{c1ii}, and \ref{c1iii} from Lemma \ref{posledica1} for the matrix $\aaa'$ are exactly the cases \ref{i}, \ref{ii1}, and \ref{ii2}, which by assumption cannot hold. Thus, by Lemma \ref{posledica1}, the set $S'$ of all $(x_0,\ldots,x_{n-1}) \in (K^\times)^n$ such that $\aaa'$ is invertible, is non-empty. Therefore, the equation \eqref{vektorska enacba 1} is solvable for any $(x_0,\ldots, x_{n-1}) \in S'$. Hence, for any choice of $(x_0,\ldots, x_{n-1}) \in S'$ and $y_i \in K^\times$ there exist $z_i \in K$ such that
	$$ f(\xxx,\yyy,\zzz) = \ddd \text{.}$$

	Now we will solve the second equation of the system \eqref{sistem}.
	We can see that
	\begin{align*}
		f(\xxx,\yyy,\www) = \sum_{i=0}^{n-1} \Big( &\, x_{i}  (w_{i+1} \lambda_1 + w_{i} (\mu_2 + \lambda_3)) \\
		+ &\,  x_{i+1} (w_{i} \mu_1 + w_{i+1} (\lambda_2 + \mu_3)) \Big) y_{i}\eee_{i, i+1} \text{,}
	\end{align*}
	which can be written as
	\begin{align}\label{enacba ki implicira 2}
		f(\xxx,\yyy,\www) = \sum_{i=0}^{n-1} (\aaa^\intercal \vecw)_i y_i \eee_{i,i+1}
	\end{align}
	for $\vecw = \begin{bmatrix} w_0 & w_1 & \ldots & w_{n-1}\end{bmatrix}^\intercal \in K^n$ and
	$$\aaa = \begin{bmatrix} x_0 \vecu + x_{1} \vecv & s(x_1 \vecu + x_{2} \vecv) & \ldots & s^{n-1}(x_{n-1}\vecu + x_{0} \vecv) \end{bmatrix} \in M_n(K)$$
	with
	\begin{align*}
		\vecu &= \begin{bmatrix} (\mu_2 + \lambda_3) &  \lambda_1 & 0 & \ldots & 0 \end{bmatrix}^\intercal \in F^n \text{,} \\
		\vecv &= \begin{bmatrix} \mu_1 & (\lambda_2 + \mu_3) & 0 & \ldots & 0 \end{bmatrix}^\intercal \in F^n \text{.}
	\end{align*}
	If $\nnn = 0$, then we can pick $w_i = 0$; hence, assume that $\nnn \neq 0$. Then $\nnn = \sum_{i=0}^{n-1} \nu_i \eee_{i,i+1}$ has at least two non-zero entries. For simplicity assume that $\nu_0,\ldots, \nu_k$ are non-zero and $\nu_{k+1} = \cdots = \nu_{n-1} = 0$, where $1 \leq k \leq n-1$.
	Write $\vec{\nu} = \begin{bmatrix} \nu_0 & \nu_1 & \ldots & \nu_{n-1}\end{bmatrix}^\intercal \in K^n$. The equality \eqref{enacba ki implicira 2} shows that solving $f(\xxx,\yyy,\www) = \nnn$ is equivalent to solving
	\begin{align} \label{vektorska enacba 2}
		\diag(y_0,\ldots,y_{n-1})\aaa^\intercal \vecw = \vec{\nu} \text{.}
	\end{align}
	Since the case \ref{iii} does not hold, either $s\vecu,\ldots,s^{n-1}\vecu$ or $s \vecv, \ldots, s^{n-1} \vecv$ are linearly independent. By Lemma \ref{odcepljena lema} for the matrix $\aaa$, there is a non-empty Zariski open set $S \subseteq (K^\times)^n$ such that for any $(x_0,\ldots,x_{n-1}) \in S$ the kernel of the matrix $\aaa$ is either trivial or spanned by a vector $\vecb \in K^n$ with non-zero entries. The first case is easy to handle, so assume that the second one holds. Pick $\eta_0,\ldots,\eta_k \in K^\times$ such that for the vector $\vec{\eta} = \begin{bmatrix} \eta_0,\ldots,\eta_k, 0,\ldots, 0\end{bmatrix}^\intercal$ we have $\vec{\eta}^\intercal \vecb = 0$ (this can be done since $k \geq 1$ and $\vecb$ has non-zero entries). This means that $\vec{\eta}$ is contained in the image of $\aaa^\intercal$, and therefore the system
	$$ \aaa^\intercal \vecw = \vec{\eta}$$
	can be solved. Taking
	$$ y_i = \begin{cases} \frac{\nu_i}{\eta_i},& i \leq k \\ 1 ,& i > k\end{cases}$$
	solves the system \eqref{vektorska enacba 2}. Hence, for any choice of $(x_0,\ldots,x_{n-1}) \in S$ there are $y_i \in K^\times$ and $w_i \in K$ such that
	$$ f(\xxx,\yyy,\www) = \nnn \text{.}$$

	Observe that the set $S'$ is Zariski open. Since so is $S$ and $K$ is infinite, the intersection of $S$ and $S'$ is non-empty. Thus, we can pick $(x_0,\ldots,x_{n-1}) \in S \cap S'$. Then we can choose $y_i \in K^\times$ and $w_i \in K$ such that $f(\xxx,\yyy,\www) = \nnn$. Finally, we can also pick $z_i \in K$ such that $f(\xxx,\yyy,\zzz) = \ddd$. This solves the system \eqref{sistem}. Hence, $J_n(K) \subseteq f(M_n(K))$, which contradicts the assumption.
\end{proof}



\section{Main Theorem} \label{thm sec}

In this section we will show that no multilinear polynomial of degree three can satisfy the equations given in Proposition \ref{pomembna lema} for every $\rho$ (under an additional assumption). The main theorem will follow from this. 

We first consider a special family of multilinear polynomials.
As usual, we write $[y,z]$ for $yz-zy$.

\begin{lemma} \label{sur1}
	Let $n \in \N$, $n \geq 3$, let $K$ be an infinite field, and let $A$ be a $K$-algebra. If $\lambda \in K$, $\lambda \neq 1$, then the polynomial
	$$f(x,y,z) = x[y,z] - \lambda [y,z]x$$
	is surjective in $M_n(A)$.
\end{lemma}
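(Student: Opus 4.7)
The plan is to pick a fixed commutator $c = [y,z]$ with $y, z \in M_n(K)$ such that the $K$-linear operator $T \colon M_n(A) \to M_n(A)$, $T(x) = xc - \lambda c x$, is bijective. Once this is achieved, any $M \in M_n(A)$ can be realised as $M = T(x)$ for a unique $x \in M_n(A)$, giving $f(x, y, z) = x[y,z] - \lambda [y,z] x = T(x) = M$ and hence surjectivity of $f$.

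To make $T$ easy to analyse, I will take $c = \diag(c_1, \ldots, c_n) \in M_n(K)$ with scalar entries $c_i \in K$; then each $c_i$ commutes with every element of $A$, and a direct computation on matrix coordinates yields $(T(x))_{ij} = (c_j - \lambda c_i)\, x_{ij}$. Hence $T$ is bijective on $M_n(A)$ precisely when $c_j - \lambda c_i \neq 0$ for all $i,j$. The case $i = j$ reduces to $(1-\lambda) c_i \neq 0$, automatic from $\lambda \neq 1$ and $c_i \neq 0$; the case $i \neq j$ is $c_j \neq \lambda c_i$. To simultaneously arrange that $c$ is a commutator in $M_n(K)$, I will impose $\tr c = \sum_i c_i = 0$; then the classical theorem of Shoda (in the form due to Albert and Muckenhoupt, valid over an arbitrary field) produces $y, z \in M_n(K)$ with $[y, z] = c$.

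What remains is to exhibit a tuple $(c_1, \ldots, c_n) \in K^n$ satisfying $\sum_i c_i = 0$, $c_i \neq 0$, and $c_j \neq \lambda c_i$ for all $i \neq j$. These constraints cut out the complement, inside the trace-zero hyperplane, of finitely many further linear hyperplanes; since $K$ is infinite, this complement is non-empty unless one of the forbidden relations coincides with $\sum_i c_i = 0$. Each forbidden relation involves at most two of the coordinates, while the trace-zero condition involves all $n$, so a coincidence can happen only for $n = 2$ with $\lambda = -1$. For $n \geq 3$ all the hyperplanes are distinct and a valid $c$ exists. The only external ingredient is Shoda's theorem; the main (and essentially the only) delicate step is this hyperplane-avoidance argument, and it is precisely there that the hypothesis $n \geq 3$ enters.
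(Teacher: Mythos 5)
Your proposal is correct and follows essentially the same route as the paper: fix a trace-zero diagonal matrix, realise it as a commutator via Albert--Muckenhoupt, observe that $x \mapsto x\ddd - \lambda \ddd x$ acts entrywise by the scalars $d_j - \lambda d_i$, and choose the diagonal entries on the trace-zero hyperplane avoiding the finitely many hyperplanes $d_j = \lambda d_i$, using that a linear variety over an infinite field cannot be covered by proper linear subvarieties unless it lies in one of them (which is where $\lambda \neq 1$ and $n \geq 3$ enter, exactly as in the paper).
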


\begin{proof}
	Let $d_1,\ldots,d_n \in K$ be such that
	$$ d_1 + \dots + d_{n} = 0 \text{.}$$
	Then the matrix $\ddd = \diag(d_1,\ldots,d_{n})$ has trace zero, and by \cite{Al}, there exist $\yyy,\zzz \in M_n(K)$ such that
	$$[\yyy,\zzz] = \ddd \text{.}$$
	If we set $$\xxx = \sum_{i,j} x_{ij} \eee_{ij} \in M_n(A)$$
	for the variables $x_{ij} \in A$, the $ij$-th entry of the matrix $f(\xxx,\yyy,\zzz)$ is
	$$f(\xxx,\yyy,\zzz)_{ij} = (d_j - \lambda d_i) x_{ij} \text{.}$$
	Therefore, it is enough to prove that there exists a choice of $d_1,\ldots,d_{n} \in K$ with $d_1 + \cdots + d_n = 0$ such that
	$$d_j \neq \lambda d_i$$
	for any $i,j = 1,\ldots, n$.
	For contradiction assume that this is not the case. This means that the variety
	$$V = \{ (d_1,\ldots,d_n) \in K^n \mid  d_1 + \cdots + d_n = 0 \}$$
	is contained in the union of the varieties
	$$ V_{ij} = \{ (d_1,\ldots,d_n) \in K^n \mid d_j = \lambda d_i \}$$
	for $i,j = 1,\ldots,n$. Since $K$ is infinite, $V$ is irreducible, and thus we have $V \subseteq V_{ij}$ for some $i,j \in \{1,\ldots,n\}$. But this is clearly not the case since $\lambda \neq 1$ and $n \geq 3$.
\end{proof}

The proof of the following proposition is obtained by solving all the equations appearing in Proposition \ref{pomembna lema}. It only involves elementary yet tedious calculations.

\begin{proposition} \label{prop1}
	Let $n \in \N$, $n \geq 3$, and let $F$ be a field of characteristic not $2$ or $3$ such that for any $n$-th roots of unity $\omega, \eta, \theta \in F$, distinct from $1$, we have
	$$ \omega\eta\theta - \omega -\eta-\theta +2 \neq 0 \text{.}$$
	Let
	\[
	f(x,y,z) = \lambda_1 xyz + \lambda_2 yzx + \lambda_3 zxy + \mu_1 zyx + \mu_2 xzy + \mu_3 yxz
	\]
	for some $\lambda_i, \mu_i \in F$ such that
	\begin{align} \label{vsota ni nic}
		\lambda_1 + \lambda_2 + \lambda_3 \neq 0 \text{.}
	\end{align}
	Then for any infinite field $K$ extending $F$, we have $$J_n(K) \subseteq f(M_n(K))\text{.}$$
\end{proposition}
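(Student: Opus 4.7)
The plan is to argue by contradiction. Assume $J_n(K)\not\subseteq f(M_n(K))$; then by Proposition~\ref{pomembna lema}, for each cyclic permutation $\rho\in\{\mathrm{id},\cycle{1,2,3},\cycle{1,3,2}\}$ one of the four cases \ref{i}--\ref{iii} holds. Since the sum $\lambda_1+\lambda_2+\lambda_3$ is invariant under $\rho$, the hypothesis~\eqref{vsota ni nic} forces $\omega\neq 1$ in every occurrence of case \ref{i}: taking $\omega=1$ would reduce the first equation of case \ref{i} to $\lambda_1+\lambda_2+\lambda_3=0$.

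The principal subcase is when \ref{i} holds for all three $\rho$, with corresponding roots of unity $\omega_1,\omega_2,\omega_3\neq 1$. The three $\lambda$-equations assemble into a linear system in $(\lambda_1,\lambda_2,\lambda_3)$ whose $3\times 3$ coefficient matrix is
\[
\begin{pmatrix} 1 & 1 & \omega_1 \\ \omega_2 & 1 & 1 \\ 1 & \omega_3 & 1 \end{pmatrix},
\]
whose determinant is $\omega_1\omega_2\omega_3-\omega_1-\omega_2-\omega_3+2$. The arithmetic hypothesis on $F$, together with $\omega_i\neq 1$, makes this determinant non-zero, so $\lambda_1=\lambda_2=\lambda_3=0$, contradicting~\eqref{vsota ni nic}.

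In every remaining combination---where at least one $\rho$ falls into \ref{ii1}, \ref{ii2}, or \ref{iii}---I would combine the resulting linear relations across the three $\rho$'s and show that either all six coefficients $\lambda_i,\mu_i$ vanish (once again contradicting~\eqref{vsota ni nic}) or, after a suitable cyclic relabelling of $x,y,z$, the polynomial $f$ reduces to $\alpha\,u[v,w]-\beta\,[v,w]u$ with $\beta\neq\alpha$, whereupon Lemma~\ref{sur1} forces $f$ to be surjective on $M_n(K)$ and hence $J_n(K)\subseteq f(M_n(K))$, contradicting the initial assumption. A representative mixed subcase: if \ref{ii1} holds for $\mathrm{id}$ and \ref{i} holds for $\cycle{1,2,3}$, substituting $\lambda_3=\mu_3=0$ into the case \ref{i} relations yields $\lambda_2=-\omega\lambda_1$ and $\mu_2=-\omega^{-1}\mu_1$; the identity $\lambda_1+\lambda_2+\mu_1+\mu_2=0$ then forces $\mu_1=\omega\lambda_1$ and $\mu_2=-\lambda_1$, so $f=\lambda_1\bigl(x[y,z]-\omega[y,z]x\bigr)$, and Lemma~\ref{sur1} applies since $\omega\neq 1$ and $\lambda_1\neq 0$.

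The main obstacle is the bookkeeping: there are nominally $4^3=64$ case combinations, reducible by the cyclic symmetry among the three $\rho$'s to roughly twenty, each a short linear-algebra exercise. The characteristic restrictions on $F$ (not~$2$ or~$3$) enter the tedious cases to turn identities such as $2c=0$ into $c=0$; for example, in the combination where \ref{ii2} holds for all three $\rho$, the relations collapse to a single parameter $c$ subject to $2c=0$, and $\mathrm{char}\,F\neq 2$ then forces $c=0$.
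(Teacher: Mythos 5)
Your setup (contradiction via Proposition \ref{pomembna lema}), the observation that \eqref{vsota ni nic} forces $\omega\neq 1$ in every occurrence of case \ref{i}, the determinant computation $\omega_1\omega_2\omega_3-\omega_1-\omega_2-\omega_3+2$ when \ref{i} holds for all three $\rho$, and your representative mixed subcase are all correct and match the paper. However, there is a genuine gap in the blanket claim that every remaining combination forces either all coefficients to vanish or $f$ to reduce, after cyclic relabelling, to $\alpha\,u[v,w]-\beta\,[v,w]u$ with Lemma \ref{sur1} applicable. That dichotomy is false: the configuration in which case \ref{ii1} holds for two of the three cyclic permutations (and \ref{ii2} for the third) is fully consistent with all the constraints of Proposition \ref{pomembna lema} and yields $\lambda_2=\lambda_3=\mu_2=\mu_3=0$, $\mu_1=-\lambda_1$, i.e.\ $f(x,y,z)=\lambda_1(xyz-zyx)$ with $\lambda_1\neq 0$ by \eqref{vsota ni nic}. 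This generalized commutator is not of the form $u[v,w]-\lambda[v,w]u$ under any assignment of the variables: in the latter the two monomials beginning with $u$ always occur with nonzero coefficients $1$ and $-1$, whereas $xyz-zyx$ consists of two monomials beginning with different letters. So no amount of linear bookkeeping within your framework eliminates this case, and Lemma \ref{sur1} cannot close it; the paper has to import a separate surjectivity result, namely Khurana--Lam \cite[Theorem B]{Kh} on generalized commutators, exactly at this point. Your proposal, as written, would therefore fail on this family of coefficient vectors.

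Two smaller remarks. First, apart from the all-\ref{i} case and the single worked mixed subcase, the case analysis is only asserted, and it is precisely in those unexecuted cases that the delicate inputs sit: besides the Khurana--Lam case above, the subcase where \ref{i} holds for exactly one $\rho$ and \ref{ii2} for the other two produces $3\lambda_3=0$, so the hypothesis $\operatorname{char}F\neq 3$ is needed there (your sketch only gestures at $\operatorname{char}F\neq 2$). Second, the conclusion ``all six coefficients vanish'' is more than you need or can always get; the contradiction with \eqref{vsota ni nic} only requires $\lambda_1=\lambda_2=\lambda_3=0$, which is what the paper's computations actually produce in several branches.
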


\begin{proof}
	For contradiction assume that there is an infinite filed $K$ extending $F$ such that
	\begin{align}\label{ne vsebuje}
		J_n(K) \not\subseteq f(M_n(K)) \text{.}
	\end{align}
	By Proposition \ref{pomembna lema}, for every $\rho \in \{ {\rm id}, \cycle{1, 2, 3}, \cycle{1, 3, 2}\}$ one of the cases \ref{i}, \ref{ii1}, \ref{ii2}, or \ref{iii} holds.

	Assume that \ref{iii} holds for some $ \rho \in \{ {\rm id}, \cycle{1, 2, 3}, \cycle{1, 3, 2}\}$ -- without loss of generality, we will assume $\rho = {\rm id}$. Then we have
	\begin{align} \label{caseiiieq1}
		\begin{split}
		\lambda_{1} = \mu_{1} &= 0 \text{,}\\
		\mu_{2} + \lambda_{3} &= 0 \text{,}\\
		\lambda_{2} + \mu_{3} &= 0 \text{.}
		\end{split}
	\end{align}
	By Proposition \ref{pomembna lema}, one of the cases \ref{i}, \ref{ii1}, \ref{ii2}, or \ref{iii} holds for the permutation $\rho = \cycle{1, 3, 2}$.
	
 	If \ref{iii} holds, then we have
	\begin{align*}
		\lambda_{3} = \mu_{3} &= 0 \text{.}
	\end{align*}
	The above equation together with \eqref{caseiiieq1} shows that $\lambda_1,\lambda_2,\lambda_3$ are zero, which contradicts \eqref{vsota ni nic}.
	
	If \ref{ii2} holds, we have
	\begin{align} \label{caseiiieq2}
		\begin{split}
			\lambda_{3} + \lambda_{1} &= 0 \text{,}\\
			\mu_{3} + \mu_{1} &= 0 \text{.}
		\end{split}
	\end{align}
	Since $\lambda_1 = \mu_1 = 0$ by \eqref{caseiiieq1}, the equation \eqref{caseiiieq2} shows that $\lambda_3 = \mu_3 = 0$. Plugging this back into \eqref{caseiiieq1} gives us that $\lambda_1,\lambda_2,\lambda_3$ are zero, which contradicts \eqref{vsota ni nic}.
	
	If \ref{ii1} holds, we have
	\begin{align*}
		\lambda_{2} = \mu_{2} &= 0 \text{.}
	\end{align*}
	Then the equation \eqref{caseiiieq1} shows that $\lambda_1,\lambda_2,\lambda_3$ are zero, which contradicts \eqref{vsota ni nic}.
	
	If \ref{i} holds, there is an $n$-th root of unity $\omega \in F$ such that
	\begin{align*}
		\lambda_{3} + \lambda_{1} + \omega \lambda_{2} &= 0 \text{,}\\
		\mu_{3} + \mu_{1} + \omega^{-1} \mu_{2} &= 0 \text{.}
	\end{align*}
	Since, by the equation \eqref{caseiiieq1}, we have $\lambda_1=\mu_1 = 0$, the above equality shows $\lambda_3 = - \omega \lambda_2$. Plugging this into \eqref{caseiiieq1} gives us
	\begin{align*}
		\mu_2 &= \omega \lambda_2 \text{,} \\
		\mu_3 &= - \lambda_2 \text{.}
	\end{align*}
	This means that
	\begin{align*}
		f(x,y,z) &= \lambda_2 \left(yzx - \omega zxy + \omega xzy - yxz \right) \\
		&= \lambda_2 \left( y[z,x] - \omega [z,x] y \right) \text{.}
	\end{align*}
	By Lemma \ref{sur1}, $f$ is surjective in $M_n(K)$ unless $\lambda_2 = 0$ or $\omega = 1$, but this contradicts either \eqref{ne vsebuje} or \eqref{vsota ni nic}.
	We have thus seen that the case \ref{iii} cannot hold for any cyclic permutation. Therefore, for every $\rho \in \{ {\rm id}, \cycle{1, 2, 3}, \cycle{1, 3, 2}\}$ either \ref{i}, \ref{ii1} or \ref{ii2} holds. We distinguish four possible cases.

	First, assume that the case \ref{i} holds for every $\rho \in \{ {\rm id}, \cycle{1, 2, 3}, \cycle{1, 3, 2}\}$. This means that there exist $n$-th roots of unity $\omega,\eta,\theta \in F$ such that
	\begin{align}\label{caseieq1}
		\begin{split}
			\lambda_1 + \lambda_2 + \omega \lambda_3 &= 0 \text{,}\\
			\lambda_3 + \lambda_1 + \eta \lambda_2 &= 0 \text{,}\\
			\lambda_2 + \lambda_3 + \theta \lambda_1 &= 0 \text{.}
		\end{split}
	\end{align}
	If some of $\omega, \eta,\theta$ were equal $1$, this would contradict \eqref{vsota ni nic}; hence, we can assume that all of them are distinct from $1$. The system of equations \eqref{caseieq1} tells us that the vector $\begin{bmatrix} \lambda_1 & \lambda_2 & \lambda_3 \end{bmatrix}^\intercal \in F^3$ lies in the kernel of the matrix
	$$ \aaa = \begin{bmatrix} 1 & 1 & \omega \\ 1 & \eta & 1 \\ \theta & 1 & 1\end{bmatrix} \in M_3(F) \text{.}$$
	But $\aaa$ is invertible, since
	\begin{align*}
		\det \aaa = \omega + \eta + \theta - \omega\eta\theta-2 \neq 0
	\end{align*}
	by assumption. This shows that $\lambda_1,\lambda_2,\lambda_3$ are zero, which contradicts \eqref{vsota ni nic}.

	Second, assume that the case \ref{i} holds for exactly two permutations $\rho \in \{ {\rm id}, \cycle{1, 2, 3}, \cycle{1, 3, 2}\}$ and does not hold for the third permutation -- without loss of generality, we will assume that \ref{i} holds for $\rho = \cycle{1, 2, 3}$ and $\rho = \cycle{1, 3, 2}$.
	This means there are $n$-th roots of unity $\omega, \eta \in F$, distinct from $1$ (otherwise \eqref{vsota ni nic} is contradicted), such that
	\begin{align*}
		\begin{split}
			\lambda_2 + \lambda_3 + \omega \lambda_1 &= 0 \text{,}\\
			\lambda_3 + \lambda_1 + \eta \lambda_2 &= 0 \text{,}\\
			\mu_2 + \mu_3 + \omega^{-1} \mu_1 & = 0 \text{,}\\
			\mu_3 + \mu_1 + \eta^{-1} \mu_2 &=0 \text{.}
		\end{split}
	\end{align*}
	Solving this system by taking $\lambda_1$ and $\mu_1$ as parameters gives
	\begin{align} \label{caseiveq2}
		\begin{split}
			\lambda_2 &= \frac{1-\omega}{1-\eta} \lambda_1 \text{,}\\
			\lambda_3 &= \frac{\omega \eta - 1}{1-\eta} \lambda_1 \text{,}\\
			\mu_2 &= \frac{1-\omega^{-1}}{1-\eta^{-1}} \mu_1 = \omega^{-1}\eta \frac{1-\omega}{1-\eta} \mu_1\text{,}\\
			\mu_3 &= \frac{\omega^{-1} \eta^{-1} - 1}{1-\eta^{-1}} \mu_1 = \omega^{-1} \frac{\omega\eta- 1}{1-\eta} \mu_1 \text{.}
		\end{split}
	\end{align}
	We must have
	$$\lambda_1 + \lambda_2 + \lambda_3 + \mu_1 + \mu_2 + \mu_3 =(1-\omega)(\lambda_1 -\omega^{-1} \mu_1) = 0 \text{,}$$
	since otherwise the polynomial $f$ is surjective, which contradicts \eqref{ne vsebuje}. If we plug $\mu_1 = \omega \lambda_1$ into \eqref{caseiveq2}, we obtain
	\begin{align} \label{caseiveq3}
		\begin{split}
			\lambda_2 &= \frac{1-\omega}{1-\eta} \lambda_1 \text{,}\\
			\lambda_3 &= \frac{\omega \eta - 1}{1-\eta} \lambda_1 \text{,}\\
			\mu_1 &= \omega \lambda_1 \text{,} \\
			\mu_2 &= \eta \frac{1-\omega}{1-\eta} \lambda_1\text{,}\\
			\mu_3 &= \frac{\omega\eta- 1}{1-\eta} \lambda_1 \text{.}
		\end{split}
	\end{align}
	For $\rho = {\rm id}$ either \ref{ii1} or \ref{ii2} holds. In both cases we have, using the above equality,
	\begin{align*}
		\lambda_3+ \mu_3 = 2 \frac{\omega\eta- 1}{1-\eta} \lambda_1 = 0 \text{.}
	\end{align*}
	Since the characteristic of $F$ is not $2$ and $\lambda_1 \neq 0$ (as $\lambda_1 = 0$  contradicts \eqref{vsota ni nic}), we have $$\eta = \omega^{-1} \text{.}$$
	The equation \eqref{caseiveq3} now tells us that
	\begin{align*}
		\begin{split}
			\lambda_2 &= -\omega \lambda_1 \text{,}\\
			\lambda_3 &= 0 \text{,}\\
			\mu_1 &= \omega \lambda_1 \text{,}\\
			\mu_2 &= -\lambda_1\text{,}\\
			\mu_3 &= 0 \text{,}
		\end{split}
	\end{align*}
	which just means that
	\begin{align*}
		f(x,y,z) &= \lambda_1 \left( xyz - \omega yzx + \omega zyx - xzy \right) \\
		&=  \lambda_1 \left( x[y,z] - \omega [y,z]x \right) \text{.}
	\end{align*}
	By Lemma \ref{sur1}, $f$ is surjective in $M_n(K)$ unless $\lambda_1 = 0$, but this contradicts either \eqref{ne vsebuje} or \eqref{vsota ni nic}.

	Third, assume that the case \ref{i} holds for some $\rho \in \{ {\rm id}, \cycle{1, 2, 3}, \cycle{1, 3, 2}\}$ and does not hold for the other two permutations -- without loss of generality, we will assume $\rho = {\rm id}$. Thus, there exists an $n$-th root of unity $\omega \in F$ such that
	\begin{align}\label{casexieq5}
		\begin{split}
		\lambda_1 + \lambda_2 + \omega \lambda_3 &= 0 \text{,}\\
		\mu_1 + \mu_2 + \omega^{-1} \mu_3 &= 0 \text{.}
		\end{split}
	\end{align}
	For every $\rho \in \{\cycle{1, 2, 3}, \cycle{1, 3, 2}\}$ either \ref{ii1} or \ref{ii2} holds. Since for any $\rho$ both \ref{ii1} and \ref{ii2} imply the equations
	\begin{align*}
		\lambda_{\rho(1)} + \lambda_{\rho(2)} + \mu_{\rho(1)} + \mu_{\rho(2)} &= 0 \text{,} \\
		\lambda_{\rho(3)} + \mu_{\rho(3)} &= 0 \text{,}
	\end{align*}
	in any case we have
	\begin{align}\label{casexieq6}
		\begin{split}
			\lambda_1 + \mu_1 &= 0 \text{,}\\
			\lambda_2 + \mu_2 &= 0 \text{,}\\
			\lambda_3 + \mu_3 &= 0 \text{.}
		\end{split}
	\end{align}
	Combining this with the equation \eqref{casexieq5} gives us
	$$(1-\omega^2) \lambda_3 = 0 \text{.}$$
	If $\lambda_3 = 0$ or $\omega=1$, then the equation \eqref{casexieq5} contradicts \eqref{vsota ni nic}, thus $\omega= -1$ and the equations \eqref{casexieq5} and \eqref{casexieq6} become
	\begin{align}\label{casexieq7}
		\begin{split}
			\lambda_3 &= \lambda_1 + \lambda_2 \text{,}\\
			\mu_1 &= - \lambda_1 \text{,}\\
			\mu_2 &= - \lambda_2 \text{,}\\
			\mu_3 &= - \lambda_1 - \lambda_2 \text{.}\\
		\end{split}
	\end{align}
	Now assume that \ref{ii1} holds for some $\rho \in \{\cycle{1, 2, 3}, \cycle{1, 3, 2}\}$. We will consider the case where $\rho = \cycle{1, 2, 3}$; the other case is similar. We thus have
	\[
		\lambda_1 = \mu_1 = 0 \text{.}
	\]
	By plugging this into \eqref{casexieq7}, we see that
	\begin{align*}
		f(x,y,z) &= \lambda_2 (yzx + zxy - xzy - yxz)\\
		&= \lambda_2 (y [z,x] + [z,x] y) \text{.}
	\end{align*}
	By Lemma \ref{sur1}, $f$ is surjective in $M_n(K)$ unless $\lambda_2 = 0$, but this contradicts either \eqref{ne vsebuje} or \eqref{vsota ni nic}.
	Hence, \ref{ii2} holds for $\rho = \cycle{1, 2, 3}$ and $\rho = \cycle{1, 3, 2}$. Thus, we have
	\begin{align*}
		\lambda_2 + \lambda_3 &= 0 \text{,}\\
		\lambda_1 + \lambda_3 &= 0 \text{.}
	\end{align*}
	By adding these two equations together, we see that
	$$ (\lambda_1 + \lambda_2) + 2 \lambda_3 = 0 \text{.}$$
	Combining this with \eqref{casexieq7}, we obtain
	$$ 3\lambda_3 = 0 \text{.}$$
	Since the characteristic of $F$ is not $3$, this implies that $\lambda_3 = 0$, which contradicts \eqref{vsota ni nic}.

	Fourth, assume that the case \ref{i} does not hold for any permutation in $\{ {\rm id}, \cycle{1, 2, 3}, \cycle{1, 3, 2}\}$. This means that for every $ \rho \in \{ {\rm id}, \cycle{1, 2, 3}, \cycle{1, 3, 2}\}$ either \ref{ii1} or \ref{ii2} holds. As pointed out in the previous case, \eqref{casexieq6} is fulfilled.
	We distinguish three possible cases.
	
	First, assume that for at least two permutations $\rho \in \{ {\rm id}, \cycle{1, 2, 3}, \cycle{1, 3, 2}\}$ the case \ref{ii1} holds -- without loss of generality, we will assume that \ref{ii1} holds for $\rho = {\rm id}$ and $\rho = \cycle{1,3,2}$. Hence, we have
	\begin{align*}
		\lambda_3 = \mu_3 &= 0 \text{,} \\
		\lambda_2 = \mu_2 &= 0 \text{.}
	\end{align*}
	Together with \eqref{casexieq6} this shows that
	\[
		f(x,y,z) = \lambda_1 (xyz-zyx) \text{.}
	\]
	But then \cite[Theorem B]{Kh} shows that $f$ is surjective in $M_n(K)$ unless $\lambda_1 = 0$, which contradicts either \eqref{ne vsebuje} or \eqref{vsota ni nic}.
	
	Second, assume that for some permutation $\rho \in \{ {\rm id}, \cycle{1, 2, 3}, \cycle{1, 3, 2}\}$ the case \ref{ii1} holds, and for the remaining two permutations the case \ref{ii2} holds -- without loss of generality, we will assume $\rho = {\rm id}$. Thus, we have
	\begin{align*}
		\lambda_3 = \mu_3 &= 0 \text{,}
	\end{align*}
	and
	\begin{align*}
		\lambda_2 + \lambda_3 &= 0 \text{,}\\
		\lambda_1 + \lambda_3 &= 0 \text{.}
	\end{align*}
	Plugging $\lambda_3 = 0$ into the last equation shows that $\lambda_1,\lambda_2,\lambda_3$ are zero, which contradicts \eqref{vsota ni nic}.
	
	Third, assume that the case \ref{ii1}  does not hold for any $\rho \in \{ {\rm id}, \cycle{1, 2, 3}, \cycle{1, 3, 2}\}$. This means that for every $\rho \in \{ {\rm id}, \cycle{1, 2, 3}, \cycle{1, 3, 2}\}$ we have \ref{ii2}. Thus,
	\begin{align*}
		\lambda_1 + \lambda_2 &= 0 \text{,}\\
		\lambda_2 + \lambda_3 &= 0 \text{,}\\
		\lambda_3 + \lambda_1 &= 0 \text{.}
	\end{align*}
	Adding all the equations together gives us
	$$ 2 (\lambda_1 + \lambda_2 + \lambda_3) = 0 \text{.}$$
	Since the characteristic of $F$ is not $2$, the above equality contradicts \eqref{vsota ni nic}. This concludes the proof.
\end{proof}

We are now in position to prove the main theorem.

\begin{theorem} \label{thm}
	Let $n \in \N$, $n \geq 4$, and let $F$ be a field of characteristic not $2$ or $3$ such that for any $n$-th roots of unity $\omega, \eta, \theta \in F$, distinct from $1$, we have
	\begin{align} \label{condition}
		\omega\eta\theta - \omega -\eta-\theta +2 \neq 0 \text{.}
	\end{align}
	If $f \in F \langle x,y,z \rangle$ is a multilinear polynomial and $K$ is an algebraically closed extension of $F$, then the image of $f$ in $M_n(K)$ is either ${\rm sl}_n(K)$ or $M_n(K)$.
\end{theorem}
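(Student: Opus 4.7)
My plan combines three ingredients. First, Mesyan's theorem \cite{Me} gives ${\rm sl}_n(K) \subseteq f(M_n(K))$ for every nonzero multilinear polynomial of degree three. Second, Proposition \ref{prop1} gives $J_n(K) \subseteq f(M_n(K))$ whenever $\lambda_1+\lambda_2+\lambda_3 \neq 0$. Third, the handful of Jordan forms outside $J_n(K)$ will need a separate treatment.

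Assuming $f \neq 0$, Mesyan's theorem gives ${\rm sl}_n(K) \subseteq f(M_n(K))$. Since the linear span of $f(M_n(K))$ is a Lie ideal of $M_n(K)$ by \cite[Theorem 2.3]{BK}, that span is either ${\rm sl}_n(K)$ or $M_n(K)$. If it equals ${\rm sl}_n(K)$, then $f(M_n(K)) = {\rm sl}_n(K)$ and we are done. Otherwise the trace polynomial $\tr(f(x,y,z)) = (\lambda_1+\lambda_2+\lambda_3)\tr(xyz) + (\mu_1+\mu_2+\mu_3)\tr(xzy)$ does not vanish identically, so at least one of $\lambda_1+\lambda_2+\lambda_3$ or $\mu_1+\mu_2+\mu_3$ is nonzero. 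The substitution $(x,y,z)\mapsto(x,z,y)$ exchanges these two sums without changing $f(M_n(K))$, so I may assume $\lambda_1+\lambda_2+\lambda_3 \neq 0$ and invoke Proposition \ref{prop1} to conclude $J_n(K) \subseteq f(M_n(K))$.

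It remains to place every Jordan normal form outside $J_n(K)$ into $f(M_n(K))$; by conjugation invariance this will give $f(M_n(K)) = M_n(K)$. Such forms are exactly $\aaa = J_2(\lambda) \oplus \diag(d_2,\ldots,d_{n-1})$, and the assumption $n \geq 4$ provides at least two trailing diagonal entries. If some $d_i \neq \lambda$, I reorder the basis so that $d_2 \neq \lambda$ and consider $\tilde\aaa = \aaa + t\,\eee_{12}$ for any $t \neq 0$: a kernel-filtration computation on the generalized $\lambda$-eigenspace shows that $\tilde\aaa$ has the same Jordan type as $\aaa$, while $\tilde\aaa$ has two nonzero entries on the cyclic superdiagonal and so lies in $J_n(K)$, whence $\aaa$ is conjugate to an element of $f(M_n(K))$. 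If instead every $d_i = \lambda$, then $\aaa = \lambda I + \eee_{01}$; for $\lambda = 0$ this is traceless and lies in ${\rm sl}_n(K)$, and for $\lambda \neq 0$ I exploit the identity $f(I,I,Z) = (\lambda_1+\lambda_2+\lambda_3+\mu_1+\mu_2+\mu_3)\,Z$, which realizes $\aaa$ directly whenever the scalar coefficient is nonzero. The remaining subcase, $\lambda_1+\lambda_2+\lambda_3+\mu_1+\mu_2+\mu_3 = 0$ together with $\lambda_1+\lambda_2+\lambda_3 \neq 0$, should force $f$ through the Proposition \ref{pomembna lema} alternatives across the three cyclic permutations of $x,y,z$ into the shape $x[y,z] - \mu[y,z]x$ up to relabelling and scaling, with $\mu \neq 1$, whereupon Lemma \ref{sur1} gives full surjectivity.

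The main obstacle is this last subcase: verifying that, once $\lambda_1+\lambda_2+\lambda_3+\mu_1+\mu_2+\mu_3 = 0$, the coefficient equations really do collapse $f$ onto a Lemma \ref{sur1} configuration. This is essentially a second pass through the case analysis of Proposition \ref{prop1}, now driven by the requirement of realizing the specific exceptional Jordan form $\lambda I + \eee_{01}$ rather than a general element of $J_n(K)$, and it is the combinatorics of the cross-permutation constraints that carries the bulk of the work.
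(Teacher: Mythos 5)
Your first two steps match the paper (Mesyan for the ${\rm sl}_n(K)$ case, the reduction to $\lambda_1+\lambda_2+\lambda_3\neq 0$, and Proposition \ref{prop1} for $J_n(K)$), and your perturbation trick for the exceptional Jordan forms with some $d_i\neq\lambda$ is correct: $\aaa+t\,\eee_{12}$ is indeed similar to $\aaa$ and lies in $J_n(K)$. The genuine gap is exactly the subcase you flag: $\aaa=\lambda I+\eee_{01}$ with $\lambda\neq 0$ and $\lambda_1+\lambda_2+\lambda_3+\mu_1+\mu_2+\mu_3=0$. Your proposed repair cannot work as described. The alternatives of Proposition \ref{pomembna lema} are only available under the hypothesis $J_n(K)\not\subseteq f(M_n(K))$, which is false in this branch (you have already concluded $J_n(K)\subseteq f(M_n(K))$ from Proposition \ref{prop1}), so there are no ``cross-permutation constraints'' to exploit; the only constraints in force are $\lambda_1+\lambda_2+\lambda_3\neq 0$ and total sum zero, and these do \emph{not} force $f$ into a Lemma \ref{sur1} configuration: for instance $f=xyz+yzx+zxy-3zyx$ satisfies both but is not of the form $u[v,w]-\mu[v,w]u$ under any relabelling. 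Moreover the machinery of Proposition \ref{pomembna lema} breaks precisely for this target, since its proof needs the superdiagonal part $\nnn$ to have at least two nonzero entries (to choose $\vec\eta$ orthogonal to the kernel vector), which is exactly why such matrices were excluded from $J_n(K)$. A similarity trick also cannot close the gap: a short rank-one, square-zero computation shows that no element of $J_n(K)$ is similar to $\lambda I+\eee_{01}$.

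The paper closes this case (and, uniformly, all exceptional Jordan forms, making your perturbation step unnecessary) by a block-diagonal construction: write $\aaa=\begin{bmatrix}\bbb & 0\\ 0 & \ddd\end{bmatrix}$ with $\bbb\in M_2(K)$ the nontrivial $2\times2$ Jordan block and $\ddd\in M_{n-2}(K)$ diagonal, realize $\bbb$ by the $2\times2$ theorem of \cite[Theorem 2]{KBMR}, realize $\ddd$ in $M_{n-2}(K)$ by Proposition \ref{prop1} (if $n\geq 5$) or again by \cite{KBMR} (if $n=4$), and take block-diagonal witnesses; this is where the hypothesis $n\geq 4$ is used. Some such additional input is needed to complete your argument, since the coefficient analysis alone does not suffice in the remaining subcase.
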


\begin{proof}
	Let
	\[
	f(x,y,z) = \lambda_1 xyz + \lambda_2 yzx + \lambda_3 zxy + \mu_1 zyx + \mu_2 xzy + \mu_3 yxz
	\]
	for some $\lambda_i, \mu_i \in F$.
	If
	\begin{align}\label{vsota je nic 2}
		\lambda_1 + \lambda_2+ \lambda_3 = \mu_1 + \mu_2 + \mu_3 =0 \text{,}
	\end{align}
	then the image of $f$ is obviously contained in ${\rm sl}_{n}(K)$, the space of traceless matrices. By \cite[Theorem 13]{Me}, the image of $f$ contains all traceless matrices and thus,
	$$ f(M_n(K)) = {\rm sl}_{n}(K) \text{.}$$
	Now assume that \eqref{vsota je nic 2} does not hold; without loss of generality assume that
	$$ \lambda_1 + \lambda_2 + \lambda_3 \neq 0 \text{.}$$
	Take an arbitrary matrix $\aaa \in M_n(K)$ in Jordan normal form. If $\aaa$ has either more than one non-trivial Jordan block or a Jordan block of size greater than $2\times 2$, then $\aaa \in J_n(K)$ and Proposition \ref{prop1} implies that $\aaa \in f(M_n(K))$. If $\aaa$ has only one non-trivial Jordan block of size $2 \times 2$ and all other Jordan blocks are of size $1 \times 1$, then we can write
	$$ \aaa = \begin{bmatrix} \bbb & 0 \\ 0 & \ddd \end{bmatrix} \text{,}$$
	where $\bbb \in M_2(K)$ is the non-trivial Jordan block and $\ddd \in M_{n-2}(K)$ is a diagonal matrix. By \cite[Theorem 2]{KBMR}, there exist $\xxx_1, \yyy_1, \zzz_1 \in M_2(K)$ such that $$f(\xxx_1,\yyy_1,\zzz_1) = \bbb \text{.}$$
	Since $n-2 \geq 2$ and any diagonal matrix is contained in $J_{n-2}(K)$, either Proposition \ref{prop1} (if $n \geq 5$) or \cite[Theorem 2]{KBMR} (if $n=4$) show that the image of $f$ in $M_{n-2}(K)$ contains all diagonal matrices. Thus, there are $\xxx_2,\yyy_2,\zzz_2 \in M_{n-2}(K)$ such that $$f(\xxx_2,\yyy_2,\zzz_2) = \ddd\text{.}$$
	By setting
	\begin{align*}
		\xxx &= \begin{bmatrix} \xxx_1 & 0 \\ 0 & \xxx_2 \end{bmatrix} \text{,}\\
		\yyy &= \begin{bmatrix} \yyy_1 & 0 \\ 0 & \yyy_2 \end{bmatrix} \text{,}\\
		\zzz &= \begin{bmatrix} \zzz_1 & 0 \\ 0 & \zzz_2 \end{bmatrix} \text{,}
	\end{align*}
	we get $$f(\xxx,\yyy,\zzz) = \aaa\text{.}$$
	This proves that $f(M_n(K))$ contains all Jordan forms. Since $K$ is algebraically closed and the image of $f$ is closed under conjugation by invertible matrices, this implies that $f$ is surjective in $M_n(K)$.
\end{proof}

\begin{remark} \label{rem2}
		The condition \eqref{condition}  is satisfied by $F=\C$.
		Indeed, we can write
		\begin{align*}
			\omega\eta\theta-\omega&-\eta-\theta+2 =\\
			&(1 -\omega)(1-\eta)(1-\theta)\left( \frac{1}{1-\omega} + \frac{1}{1-\eta} + \frac{1}{1-\theta} - 1 \right) \text{.}
		\end{align*}
		The map $$w \mapsto \frac{1}{1-w}$$
		is a M\"{o}bius transformation which maps the unit circle to the vertical line passing through $\frac{1}{2}$. Thus, the real part of
		$$ \frac{1}{1-\omega} + \frac{1}{1-\eta} + \frac{1}{1-\theta}$$
		is $\frac{3}{2}$ for any $\omega,\eta,\theta \in \C$ from the unit circle without $1$, and thus we have
		$$ \omega\eta\theta-\omega-\eta-\theta+2 \neq 0 \text{.}$$
		Hence, the condition \eqref{condition} holds for $F=\C$ for any $n \in \N$.
		By the Lefschetz principle, for a fixed $n \in \N$, the condition \eqref{condition} is satisfied by any field of either characteristic $0$ or large enough characteristic (depending on $n$).
		
		On the other hand, if $F$ is a field of characteristic $p \geq 5$, the condition \eqref{condition} fails for any $n \in \N$ divisible by $p-1$. Indeed, since $F$ has characteristic $p$, it contains $\Z_p$. Since $\Z_p^\times$ is a group with $p-1$ elements and $p-1$ divides $n$, every non-zero element in $\Z_p$ is an $n$-th root of unity. Pick any $\omega \in \Z_p \setminus \{0,1\}$
		and $\eta \in \Z_p \setminus \{0, 1, \omega^{-1}, 2-\omega\}$ (we can choose such an $\eta$ as $\Z_p$ has at least $5$ elements), and let
		$$\theta = \frac{\omega+\eta-2}{\omega\eta-1} \in \Z_p \text{.}$$
		Since $\eta \neq \omega^{-1}$, $\theta$ is well-defined, and because $\eta \neq 2-\omega$, $\theta$ is non-zero. If $\theta=1$, then
		$$ \omega\eta-\omega-\eta+1 = (\omega-1)(\eta-1) = 0 \text{,}$$
		which cannot hold as $\omega$ and $\eta$ are different from $1$. Hence, $\omega$,$\eta$, and $\theta$ are $n$-th roots of unity, different from $1$, such that
		$$ \omega\eta\theta-\omega-\eta-\theta+2 = 0 \text{.}$$
		The condition \eqref{condition} thus fails in this case. Similar examples also exist for $p=2,3$.
\end{remark}

Finally, we can prove the main theorem.

\begin{theorem}
	Let $K$ be an algebraically closed field of characteristic $0$. If $f \in K \langle x, y, z \rangle$ is a multilinear polynomial of degree three, then the image of $f$ in $M_n(K)$ is a vector space for every $n \in \N$.
\end{theorem}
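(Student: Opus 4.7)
My plan is to dispatch the theorem by induction on $n$ (or rather, by case analysis), handing the load to Theorem~\ref{thm} whenever $n\geq 4$ and invoking known small-$n$ results for $n\leq 3$. The case $n=1$ is immediate: since $M_1(K)=K$ is commutative, $f(a,b,c)=\bigl(\sum_{\sigma\in S_3}\lambda_\sigma\bigr)\,abc$, so $f(M_1(K))$ is either $\{0\}$ or $K$, both vector spaces. For $n=2$, the full L'vov--Kaplansky conjecture was proved in \cite{KBMR}, so $f(M_2(K))$ is one of $\{0\}, K, \mathrm{sl}_2(K), M_2(K)$. For $n=3$, I would invoke the Dykema--Klep result \cite{DK}, which settles the conjecture for multilinear polynomials of degree three over $\C$ in $M_n(\C)$ for $n<17$ and, in particular, for $n=3$. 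As the introduction already points out, the statement ``$f(M_3(K))$ is one of $\{0\}$, $K$, $\mathrm{sl}_3(K)$, or $M_3(K)$'' is first-order expressible in the coefficients of $f$, so the Lefschetz principle transfers it from $\C$ to every algebraically closed field of characteristic zero.

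For $n\geq 4$, I would apply Theorem~\ref{thm} with $F=K$, noting that $K$ is trivially an algebraically closed extension of itself. The hypothesis $\ch K\neq 2,3$ is automatic since $\ch K=0$, so only condition~\eqref{condition} has to be checked. Remark~\ref{rem2} already verifies \eqref{condition} for $F=\C$ via the M\"obius transformation $w\mapsto 1/(1-w)$, and observes that, for each fixed $n$, \eqref{condition} is a first-order statement in the language of fields (the quantifiers range over the finitely many roots of $x^n-1$). Hence the Lefschetz principle transfers \eqref{condition} from $\C$ to $K$, and Theorem~\ref{thm} yields $f(M_n(K))\in\{\mathrm{sl}_n(K),\,M_n(K)\}$, both of which are vector spaces.

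The main obstacle is essentially negligible, since all the structural work has been carried out in Theorem~\ref{thm} and Remark~\ref{rem2}. The only conceptual point that requires care is the Lefschetz transfer, which must be applied twice: once to move the Dykema--Klep $n=3$ result off of $\C$, and once to import condition~\eqref{condition} to an arbitrary algebraically closed field of characteristic zero. In both cases the statement in question quantifies over a bounded number of field elements satisfying polynomial constraints, so the transfer is routine.
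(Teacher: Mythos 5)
Your proposal is correct and follows essentially the same route as the paper: $n=1$ trivial, $n=2$ via \cite{KBMR}, $n=3$ via \cite{DK}, and $n\geq 4$ via Theorem \ref{thm} combined with Remark \ref{rem2}. The only cosmetic difference is the direction of the Lefschetz transfer --- the paper first reduces the whole statement to $K=\C$ and then applies these results over $\C$, while you transfer the Dykema--Klep $n=3$ result and condition \eqref{condition} up to the general algebraically closed $K$ (applying Theorem \ref{thm} with $F=K$); both uses are legitimate.
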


\begin{proof}
	By the Lefschetz principle, we need only consider the case where $K = \C$.
	Let
	$$f(x,y,z) = \lambda_1 xyz + \lambda_2 yzx + \lambda_3 zxy + \mu_1 zyx + \mu_2 xzy + \mu_3 yxz$$
	for some $\lambda_i,\mu_i \in \C$. By the same argument as in the proof of Theorem \ref{thm}, we can assume that
	$$ \lambda_1 + \lambda_2 + \lambda_3 \neq 0 \text{.}$$
	The case where $n=1$ is trivial. The $n=2$ case is covered in \cite[Theorem 2]{KBMR}, while \cite[Theorem 1.2]{DK} proves the $n=3$ case. Finally, by Remark \ref{rem2}, the condition \eqref{condition} in Theorem \ref{thm} is satisfied, and thus applying Theorem \ref{thm} for $F=\C$ shows
	$$ f(M_n(\C)) = M_n(\C)$$
	for any $n \geq 4$.
	This concludes the proof.
\end{proof}

We also point out a result in the positive characteristic.

\begin{theorem} \label{main cor}
	Let $K$ be an algebraically closed field of characteristic $p \geq 5$, and let $f \in K \langle x,y,z \rangle$ be a multilinear polynomial of degree three. Then there is a $k \in \N$ such that the image of $f$ in $M_n(K)$ is a vector space for any $n \in \N$, $n\geq 4$, coprime to $p^k-1$.
\end{theorem}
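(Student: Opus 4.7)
The plan is to reduce Theorem \ref{main cor} directly to Theorem \ref{thm} by choosing the ground field $F$ carefully so that the arithmetic condition \eqref{condition} becomes vacuous. The condition only restricts $n$-th roots of unity in $F$, so I just need $F$ to be small enough to contain no nontrivial $n$-th roots of unity.

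First, I would let $F$ denote the subfield of $K$ generated over the prime field $\mathbb{F}_p$ by the six coefficients $\lambda_i, \mu_i$ of $f$. Then $F$ is a finitely generated extension of $\mathbb{F}_p$. The crucial observation is that every root of unity in $F$ is algebraic over $\mathbb{F}_p$ and hence lies in $L := F \cap \overline{\mathbb{F}_p}$. Because $F$ is finitely generated over $\mathbb{F}_p$, the field $L$ is a finite algebraic extension of $\mathbb{F}_p$, say $L = \mathbb{F}_{p^k}$; this $k$ is the one asserted in the theorem.

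Next I fix $n \geq 4$ with $\gcd(n, p^k - 1) = 1$. Since $L^\times$ is cyclic of order $p^k - 1$, the set of $n$-th roots of unity in $L$ is a subgroup of order $\gcd(n, p^k - 1) = 1$, i.e., just $\{1\}$. Combined with the previous step, this shows that the only $n$-th root of unity in $F$ is $1$, so the hypothesis \eqref{condition} of Theorem \ref{thm} is vacuously satisfied for this $F$ and $n$. Since $p \geq 5$, the characteristic of $F$ avoids $2$ and $3$, and $K$ is an algebraically closed extension of $F$, so Theorem \ref{thm} applies to $f$ viewed as a polynomial in $F\langle x, y, z\rangle$ and yields that $f(M_n(K))$ is either ${\rm sl}_n(K)$ or $M_n(K)$, in particular a vector space.

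I do not expect a substantial obstacle: the whole argument is a short application of the already-established Theorem \ref{thm}, once one recognizes that passing from the ambient algebraically closed $K$ down to the finitely generated coefficient subfield $F$ collapses the root-of-unity content to the finite subfield $\mathbb{F}_{p^k}$. The only point worth pausing on is that a coefficient of $f$ may well be transcendental over $\mathbb{F}_p$, making $F$ itself infinite; the argument tolerates this because transcendental elements never contribute roots of unity.
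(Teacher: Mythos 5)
Your proposal is correct and follows essentially the same route as the paper: take $F$ to be the (finitely generated) subfield of $K$ generated over the prime field by the coefficients of $f$, note that its relative algebraic closure is a finite field $\mathbb{F}_{p^k}$ so that coprimality of $n$ with $p^k-1$ leaves $1$ as the only $n$-th root of unity in $F$, and then apply Theorem \ref{thm} with condition \eqref{condition} holding vacuously. Your spelled-out justification via the cyclic group $L^\times$ and $\gcd(n,p^k-1)=1$ is exactly the argument the paper leaves implicit.
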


\begin{proof}
	Denote by $F$ the extension of $\Z_p$ with the coefficients of $f$. Since $F$ is an extension of $\Z_p$ with finitely many elements, the subfield of $F$ of all algebraic elements is a finite extension of $\Z_p$, and thus has only finitely many, say $p^k$ for some $k \in \N$, elements. This means that every algebraic element in $F^\times$ is a $(p^k-1)$-th root of unity. If $n$ is coprime to $p^k-1$, then the only $n$-th root of unity in $F$ is $1$, and the condition \eqref{condition} in Theorem \ref{thm} trivially holds. Applying Theorem \ref{thm} concludes the proof.
\end{proof}

\section*{Acknowledgment}

The author would like to thank his supervisor Matej Bre\v sar for his valuable review of a draft of the paper, endless patience, and constant guidance. He would also like to thank Daniel Smertnig for his assistance with certain technical aspects of the proof of Theorem \ref{main cor}.

\end{document}